\documentclass[12pt,reqno]{amsproc}%
\usepackage{amsfonts}
\usepackage{amsmath}
\usepackage{amssymb}
\usepackage{graphicx}%
\setcounter{MaxMatrixCols}{30}

\setlength{\hoffset}{-0.9in}
\setlength{\textwidth}{6.75in} \theoremstyle{plain}
\newtheorem{theorem}{Theorem}[section]

\newtheorem{corollary}[theorem]{Corollary}

\newtheorem{lemma}[theorem]{Lemma}

\newtheorem{proposition}[theorem]{Proposition}
\newtheorem{remark}[theorem]{Remark}

\numberwithin{equation}  {section}

\begin{document}

\title{Similarity degree of type II$_{1}$ von Neumann algebras with Property $\Gamma$}

\author{Wenhua Qian}
\address{Wenhua Qian \\
        Demartment of Mathematics and Statistics \\
         University of New Hampshire\\
         Durham, NH 03824;   Email: wuf2@wildcats.unh.edu}
\author{Don  Hadwin}
\address{Don  Hadwin \\
        Demartment of Mathematics and Statistics \\
         University of New Hampshire\\
         Durham, NH 03824;   Email: don@unh.edu}
\author{Junhao Shen}
\address{Junhao Shen \\
        Demartment of Mathematics and Statistics \\
         University of New Hampshire\\
         Durham, NH 03824;   Email:  Junhao.Shen@unh.edu}

\begin{abstract}
In this paper, we discuss some equivalent definitions of Property
$\Gamma$ for a type II$_1$ von Neumann algebra. Using these equivalent definitions, we prove that the Pisier's similarity degree of a type II$_1$
von Neumann algebra with Property $\Gamma$ is equal to $3$.

\end{abstract}

\subjclass[2000]{Primary   46L10; Secondary 46L05}
\keywords{Property $\Gamma$, Similarity problem, Similarity degree}


\maketitle

\section{Introduction}

Kadison's Similarity Problem for a C$^*$-algebra is a longstanding
open problem, which
  asks whether every
    bounded  representation $\rho$ of  a C$^*$-algebra $\mathcal A$ on a Hilbert
space $  H$ is similar to a $*$-representation. i.e. whether there
exists an invertible operator $T$ in $B(H)$, such that
$T\rho(\cdot)T^{-1}$ is a $*$-representation of $\mathcal A$.
Significant progress  toward this famous open problem was obtained in
 \cite {C3} and \cite {H1}.
 We will refer to Pisier's book \cite {Pi5} for a wonderful introduction to the problem and
many of its recent developments.

Similarity degree for a unital C$^*$-algebra $\mathcal A$, denoted
by $d(\mathcal A)$, was defined by Pisier in \cite{Pi2}. Since its
introduction, this new concept has greatly influenced the study of
Kadison's Similarity Problem for C$^*$-algebras. In fact, it was
shown in \cite{Pi2} that Kadison's Similarity Problem for a unital
C$^*$-algebra $\mathcal{A}$ has an affirmative answer if and only if
$d(\mathcal{A})<\infty$. One of the most surprising results on
similarity degree was also obtained by Pisier in \cite{Pi3} when he
proved that, for an infinite dimensional unital C$^*$-algebra
$\mathcal A$, the similarity degree of $\mathcal A$ is equal to $2$
if and only if $\mathcal A$ is a nuclear C$^*$-algebra.

Several results on similarity degree for a unital C$^*$-algebra
have now been known. For example, if there is no tracial state on a unital C$^*$-algebra
$\mathcal{A}$, then $d(\mathcal{A})=3$  (\cite{H1}, \cite{Pi4}). The similarity degree of  a type II$_{1}$ factor $\mathcal M$ with Property $\Gamma$ is less than or equal to $ 5$ (\cite{Pi4}).  This result was later
improved in \cite{C2} to that the similarity degree of  such $\mathcal M$ is equal  to $3$. When $\mathcal{A}$ is
a minimal tensor product of two C*-algebras, one of which is nuclear
and contains matrices of any order,  it was proved in \cite{Po} that  $d(\mathcal{A}) \le
5$. Recently, it was shown in \cite{JW} that, if $\mathcal{A}$ is $\mathcal{Z}$-stable, then
$d(\mathcal{A}) \leq 5$.  In \cite {QS2}, it was shown
that, if  a separable
C*-algebra $\mathcal{A}$ has Property c$^*$-$\Gamma$, then $d(\mathcal{A})
=3$, which implies that a nonnuclear separable $\mathcal{Z}$-stable C$^*$-algebra has similarity degree $3$.

In this paper, we will discuss properties of type II$_{1}$ von Neumann algebras
with Property $\Gamma$ and compute   similarity degree for this class of von Neumann algebras. First result we obtained in the paper is the following characterization of type
II$_1$ von Neumann algebras with Property $\Gamma$.

\

{Theorem \ref{3.3}.} \ {\em Suppose $\mathcal{M}$ is a type II$_{1}$ von Neumann algebra and
 $\mathcal{Z}_{\mathcal{M}}$ is the center of $\mathcal{M}$.
Let $\tau$ be the center-valued  trace on $\mathcal{M}$ such that
$\tau(a)=a$ for any $a \in \mathcal{Z}_{\mathcal{M}}$. Then the
following statements are equivalent.
\begin{enumerate}
\item[(i)] $\mathcal{M}$ has Property $\Gamma$ in the sense of Definition 3.1 in \cite{QS}.
\item[(ii)] There exists a family of nonzero orthogonal central projections $\{ q_{\alpha}: \alpha \in \Omega \}$ in $\mathcal M$ with sum $I$ such that $q_{\alpha} \mathcal{M}$ is a countably decomposable type II$_{1}$ von Neumann algbera with Property $\Gamma$ for each $\alpha \in
\Omega$.
\item[(iii)] For any $n \in \mathbb{N}$, any $\epsilon >0$ and $a_{1}, a_{2}, \dots, a_{k} \in \mathcal{M}$, there exist $n$ orthogonal equivalent projections $p_{1}, p_{2}, \dots, p_{n}$ in $\mathcal{M}$ with sum $I$ such that
$$\tau((p_{i}a_{j}-a_{j}p_{i})^*(p_{i}a_{j}-a_{j}p_{i})) < \epsilon I, \qquad \forall \quad 1 \le i \le n, 1 \le j \le k.$$
\item[(iv)] There exists a positive integer $n_{0} \ge 2$ satisfying {\em for any $\epsilon >0$ and $a_{1}, a_{2}, \dots, a_{k} \in \mathcal{M}$, there exist $n_{0}$ orthogonal equivalent projections $p_{1}, p_{2}, \dots, p_{n_{0}}$ in $\mathcal{M}$ with sum $I$ satisfying
$$\tau((p_{i}a_{j}-a_{j}p_{i})^*(p_{i}a_{j}-a_{j}p_{i})) < \epsilon I, \qquad \forall \quad 1 \le i \le n_{0}, 1 \le j \le
k.$$}
\item[(v)] For any $\epsilon >0$ and $a_{1}, a_{2}, \dots, a_{k} \in \mathcal{M}$, there exists a unitary $u$ in $\mathcal{M}$ such that
\begin{enumerate}
\item[(a)] $\tau(u)=0$;
\item[(b)] $\tau((ua_{j}-a_{j}u)^*(ua_{j}-a_{j}u)) < \epsilon I, \qquad \forall \quad 1 \le j \le k.$
\end{enumerate}
\item[(vi)] For  any $n \in \mathbb{N}$, any normal tracial state $\rho$ on $\mathcal{M}$, any $\epsilon > 0$ and $a_{1}, a_{2}, \dots, a_{k} \in \mathcal{M}$, there exist $n$ orthogonal equivalent projections $p_{1}, p_{2}, \dots, p_{n}$ in $\mathcal{M}$ with sum $I$ such that
$$\Vert p_{i}a_{j}-a_{j}p_{i} \Vert_{2, \rho} < \epsilon, \qquad \forall \quad 1\le i \le n, 1 \le j \le k,$$
where $\Vert \cdot \Vert_{2, \rho}$ is the $2$-norm on $\mathcal{M}$ induced by $\rho$.
\item[(vii)] There exists a positive integer $n_{0} \ge 2$ satisfying {\em for any  normal tracial state $\rho$ on $\mathcal{M}$, any $\epsilon > 0$ and $a_{1}, a_{2}, \dots, a_{k} \in \mathcal{M}$, there exist $n_{0}$ orthogonal equivalent projections $p_{1}, p_{2}, \dots, p_{n_{0}}$ in $\mathcal{M}$ with sum $I$ satisfying
$$\Vert p_{i}a_{j}-a_{j}p_{i} \Vert_{2, \rho} < \epsilon, \qquad \forall \quad 1\le i \le n_{0}, 1 \le j \le
k,$$}
where $\Vert \cdot \Vert_{2, \rho}$ is the $2$-norm on $\mathcal{M}$ induced by $\rho$.
\item[(viii)] For any normal tracial state $\rho$ on $\mathcal{M}$, any $\epsilon >0$ and $a_{1}, a_{2}, \dots, a_{k} \in \mathcal{M}$, there exists a unitary $u$ in $\mathcal{M}$ such that (a)  $\tau(u)=0$; and
 (b)  $\Vert ua_{j}-a_{j}u \Vert_{2, \rho} < \epsilon,$  for all $ 1 \le j \le k,$
where $\Vert \cdot \Vert_{2, \rho}$ is the $2$-norm on $\mathcal{M}$ induced by $\rho$.
\end{enumerate}}
\

We should remark that, when  a type II$_1$ von Neumann algebra $\mathcal M$ is in fact a type II$_1$ factor, our definition of Property $\Gamma$ coincides with Murray and von Neumann's original definition in \cite{Mv}.

Combining preceding {Theorem \ref{3.3} and results in \cite{QS2}, we are able to calculate the exact value  of similarity degree for a type II$_1$ von Neumann algebra with Property $\Gamma$ and obtain the next result as a generalization of Christensen's result in \cite{C2}.
\vspace{0.2cm}

{Theorem \ref{4.2}.} \ {\em If $\mathcal{M}$ is a type II$_{1}$ von Neumann algebra with Property $\Gamma$, then the similarity degree $d(\mathcal{M}) =3$. }
\

Next we apply  Theorem \ref{4.2} to calculate values of similarity degrees for two classes of C$^*$-algebras, which were also considered by Pisier in \cite{Pi4}.

Suppose $\mathcal A$ is unital C$^*$-algebra. Let ${\mathcal I}$ be some index set and
$$l_{\infty} ({\mathcal I}, \mathcal{A}) = \{ (x_{i})_{i \in \mathcal{I}}: \text{ for each } i \in {\mathcal I}, x_{i} \in \mathcal{A} \text{ and } \sup\limits_{i \in {\mathcal I}} \Vert x_{i} \Vert < \infty \}.$$

\

{Corollary} \ref{4.4}.
{\em If $\mathcal{M}$ is a type II$_{1}$ factor with Property $\Gamma$, then $d( l_{\infty} ({\mathcal I}, \mathcal{M}) ) = 3$ for any index set $\mathcal{I}$.
 }
 \vspace{0.2cm}

{Corollary} \ref{4.5}.
{\em Let $C=M_{2}(\mathbb{C}) \otimes M_{2}(\mathbb{C}) \otimes \dots$
(infinite C$^*$-tensor product of $2 \times 2$ matrix algebras).
Then, for any infinite index set ${\mathcal I}$, $d(l_{\infty}
({\mathcal I}, C) ) = 3$.
 }

 \vspace{0.2cm}

The organization of this paper is as follows. In section 2, we give some preliminaries on direct integrals of separable Hilbert spaces and von Neumann algebras acting on separable Hilbert spaces. In section 3, we give a characterization of type II$_{1}$ von Neumann algberas with Property $\Gamma$ and obtain some equivalent definitions. In section 4, by showing that every finite subset $F$ of of a type II$_{1}$ von Neumann algebra $\mathcal{M}$ with Property $\Gamma$ is contained in a separable unital C$^*$-subalgebra with Property c$^*$-$\Gamma$, we obtain that $d(\mathcal{M}) =3$.

\section{Preliminaries}

\subsection{Dixmier Approximation Theorem}

We will need the following Dixmier Approximation Theorem in the
paper.

\begin{lemma}\label{Dixmier} (Dixmier Approximation Theorem) Let $\mathcal M$ be a finite von
Neumann algebra with center $\mathcal Z$. Let $\tau$ be the
center-valued trace on $\mathcal M$. If  $a\in\mathcal M$, then
$$
\{ \tau(a) \}=\mathcal Z \cap  \left (conv_{\mathcal M}(a)^{=} \right ),
$$  where
$conv_{\mathcal M}(a)^{=}$ is  the norm closure of the convex hull of
the set $\{uau^*: u \text{ is a unitary in } \mathcal M\}$.

\end{lemma}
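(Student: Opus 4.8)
The plan is to prove the two inclusions in the stated identity separately. The inclusion $\mathcal Z\cap\bigl(conv_{\mathcal M}(a)^{=}\bigr)\subseteq\{\tau(a)\}$ is the soft one: the center-valued trace $\tau\colon\mathcal M\to\mathcal Z$ is a $\mathcal Z$-bimodule projection of norm one, hence norm continuous, and satisfies $\tau(xy)=\tau(yx)$; so for every unitary $u\in\mathcal M$ we get $\tau(uau^{*})=\tau(u^{*}ua)=\tau(a)$, which means $\tau$ is constantly equal to $\tau(a)$ on $conv_{\mathcal M}(a)$ and, by norm continuity, on its closure $conv_{\mathcal M}(a)^{=}$. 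Thus any $z\in\mathcal Z\cap conv_{\mathcal M}(a)^{=}$ satisfies $z=\tau(z)=\tau(a)$.

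The content is the reverse inclusion, namely $\tau(a)\in conv_{\mathcal M}(a)^{=}$, and I would first reduce it to self-adjoint $a$. Write $a=h+ik$ with $h,k$ self-adjoint. Applying the self-adjoint case to $h$ produces a finite ``unitary average'' $\alpha_{1}(x)=\sum_{i}\lambda_{i}u_{i}xu_{i}^{*}$ (a convex combination of unitary conjugations, hence a norm contraction that fixes every central element and preserves self-adjointness) with $\|\alpha_{1}(h)-\tau(h)\|<\epsilon/2$; applying the self-adjoint case again to the self-adjoint element $\alpha_{1}(k)$ produces $\alpha_{2}$ with $\|\alpha_{2}(\alpha_{1}(k))-\tau(k)\|<\epsilon/2$. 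Because $\alpha_{2}$ is contractive and fixes $\tau(h)$, it does not disturb the first estimate, and $\alpha_{2}\circ\alpha_{1}$ (again a finite unitary average of $a$) sends $a$ to within $\epsilon$ of $\tau(h)+i\tau(k)=\tau(a)$.

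So everything comes down to Dixmier's averaging lemma for self-adjoint elements: there is a universal constant $\gamma\in(0,1)$ such that for each self-adjoint $h\in\mathcal M$ there is a finite unitary average $\alpha$ with $\operatorname{dist}(\alpha(h),\mathcal Z)\le\gamma\operatorname{dist}(h,\mathcal Z)$. Granting it, I would iterate: set $h_{0}=h$ and $h_{k+1}=\alpha_{k}(h_{k})$, so each $h_{k}$ is a finite unitary average of $h$ and $\operatorname{dist}(h_{k},\mathcal Z)\le\gamma^{k}\operatorname{dist}(h,\mathcal Z)\to 0$. Choosing $z_{k}\in\mathcal Z$ with $\|h_{k}-z_{k}\|\to0$ and using $\tau(h_{k})=\tau(h)$ together with $\|z_{k}-\tau(h)\|=\|\tau(z_{k}-h_{k})\|\le\|z_{k}-h_{k}\|$, we obtain $\|h_{k}-\tau(h)\|\to0$, so $\tau(h)\in conv_{\mathcal M}(h)^{=}$. (Finiteness of $\mathcal M$ is used only at this last step, to pin the limiting central element down as $\tau(h)$; the averaging lemma itself is valid in an arbitrary von Neumann algebra.)

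Finally, to prove the averaging lemma I would normalize so that $\operatorname{dist}(h,\mathcal Z)=1$ and, after subtracting a near-optimal central element, $\|h\|\le 1+\epsilon'$; put $p=\chi_{[0,\infty)}(h)$ and use the comparison theorem for projections to produce a central projection $c$ with $cp\precsim c(1-p)$ and $(1-c)(1-p)\precsim(1-c)p$. From the corresponding partial isometries one builds a symmetry $v=v^{*}=v^{-1}\in\mathcal M$ which, on each of the two central summands, carries the ``positive'' spectral part of $h$ onto part of the ``negative'' part, so that $\tfrac12(h+vhv^{*})$ has strictly smaller oscillation over $\mathcal Z$; a short additional averaging yields the uniform contraction factor $\gamma$. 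The hard part will be exactly this: organizing the two central summands coming from the comparison theorem and checking that conjugation by $v$ (together with at most a few more unitaries) genuinely shrinks the distance to $\mathcal Z$ by a factor bounded away from $1$, independently of $h$. The remaining arguments are routine manipulations with the trace and with norm limits.
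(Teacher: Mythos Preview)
Your proof outline is correct and follows the classical textbook argument (as in Dixmier's original proof or Kadison--Ringrose, Chapter 8): the easy inclusion via the tracial property and norm continuity of $\tau$, the reduction to self-adjoint elements by composing two unitary averages, the geometric iteration of the averaging lemma, and finally the averaging lemma itself via spectral projections and the comparison theorem to build a symmetry that contracts the distance to $\mathcal Z$ by a uniform factor (the standard choice gives $\gamma=3/4$).

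There is nothing to compare against, however: the paper does not prove this lemma. It is stated in the preliminaries section as a named classical result (the Dixmier Approximation Theorem) and used as a black box throughout---for instance in Lemma~\ref{lem 3.1}, Lemma~\ref{lem 3.2}, Lemma~\ref{mylemma 3.1}, and in the proof of Theorem~\ref{3.3}. So your write-up supplies strictly more than the paper does here. If anything, you could shorten it to a reference to \cite{KR1} (Theorem~8.3.5 and the surrounding material), since that is consistent with how the paper treats it.
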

\subsection{Direct integral theory}

  General knowledge about direct integrals of
separable Hilbert spaces and von Neumann algebras acting on
separable Hilbert spaces can be found in \cite{vN} and \cite{KR1}.
Here we list a few  lemmas that will be needed in this paper.
\begin{lemma} \label{2.1}
(\cite{KR1}) Suppose $\mathcal{M}$ is a von Neumann algebra acting on
a separable Hilbert space $H$. Let $\mathcal{Z}$ be the center of
$\mathcal{M}$. Then there is a direct integral decomposition of
$\mathcal{M}$ relative to $\mathcal{Z}$, i.e. there exists a locally
compact complete separable metric measure space $(X, \mu)$ such that
\begin{enumerate}
\item [(i)] $H$ is (unitarily equivalent to) the direct integral of $\{ H_{s} : s \in X \}$ over $(X, \mu)$, where each $H_{s}$ is a separable Hilbert space, $s \in X$.
\item [(ii)] $\mathcal{M}$ is (unitarily equivalent to) the direct integral
of $\{ \mathcal{M}_{s} \}$ over $(X, \mu)$, where $\mathcal{M}_{s}$
is a factor in $B(H_{s})$ almost everywhere. Also, if $\mathcal{M}$
is of type $I_{n}$($n$ could be infinite), II$_{1}$, II$_{\infty}$
or $III$, then the components $\mathcal{M}_{s}$ are, almost
everywhere, of type  I$_{n}$, II$_{1}$, II$_{\infty}$ or III,
respectively.
\end{enumerate}
Moreover, the center $\mathcal{Z}$ is (unitarily equivalent to) the algebra of diagonalizable operators relative to this decomposition.
\end{lemma}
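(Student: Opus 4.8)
The plan is to derive this as an instance of von Neumann's reduction theory; the statement is classical, and the parts that require genuine work are the measure-theoretic ones, for which I would cite \cite{vN} and \cite{KR1} rather than reprove them. First I would apply the spectral theorem for abelian von Neumann algebras: since $H$ is separable, $\mathcal{Z}$ is countably generated, hence $*$-isomorphic to $L^{\infty}(X,\mu)$ for some locally compact complete separable metric space $X$ carrying a $\sigma$-finite regular Borel measure $\mu$. Using the direct-integral construction attached to such an abelian algebra one obtains a measurable field $\{H_{s}\}_{s\in X}$ of separable Hilbert spaces and a unitary identifying $H$ with $\int_{X}^{\oplus}H_{s}\,d\mu(s)$ in such a way that $\mathcal{Z}$ becomes the algebra of diagonalizable operators; this gives (i) and the final clause of the lemma.

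Next, for (ii), I would use that $\mathcal{Z}$ is \emph{the} center of $\mathcal{M}$: the diagonalizable algebra $\mathcal{Z}$ then lies in the center of $\mathcal{M}$, which by reduction theory forces $\mathcal{M}$ to be decomposable, $\mathcal{M}=\int_{X}^{\oplus}\mathcal{M}_{s}\,d\mu(s)$ with each $\mathcal{M}_{s}$ a von Neumann subalgebra of $B(H_{s})$, and likewise $\mathcal{M}'=\int_{X}^{\oplus}\mathcal{M}_{s}'\,d\mu(s)$. Taking intersections fiberwise, $\mathcal{Z}(\mathcal{M})=\mathcal{M}\cap\mathcal{M}'=\int_{X}^{\oplus}\mathcal{Z}(\mathcal{M}_{s})\,d\mu(s)$; but the left side is the diagonalizable algebra $\int_{X}^{\oplus}\mathbb{C}I_{H_{s}}\,d\mu(s)$, so $\mathcal{Z}(\mathcal{M}_{s})=\mathbb{C}I_{H_{s}}$ for $\mu$-almost every $s$, i.e.\ $\mathcal{M}_{s}$ is a factor a.e.

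For the preservation of type I would transfer the relevant invariants through the direct integral. Abelian-ness of a projection, Murray--von Neumann (sub)equivalence of a measurable field of projections, and central carriers all localize fiberwise a.e.; hence if $\mathcal{M}$ is type I$_{n}$, writing $I=\sum_{i=1}^{n}e_{i}$ with the $e_{i}$ abelian and mutually equivalent in $\mathcal{M}$ and decomposing each $e_{i}$, almost every $\mathcal{M}_{s}$ inherits $n$ abelian mutually equivalent projections summing to $I_{H_{s}}$ and so is type I$_{n}$; a measurable-selection argument gives the converse direction. For the finite case I would instead decompose the center-valued trace furnished by Lemma \ref{Dixmier}: if $\mathcal{M}$ is finite with center-valued trace $\tau$, then $\tau=\int_{X}^{\oplus}\tau_{s}\,d\mu(s)$ where, $\mathcal{M}_{s}$ being a factor, each $\tau_{s}$ is a faithful normal tracial state on $\mathcal{M}_{s}$, so $\mathcal{M}_{s}$ is finite a.e.; combined with the absence of nonzero abelian projections this identifies $\mathcal{M}_{s}$ as a type II$_{1}$ factor a.e.\ when $\mathcal{M}$ is II$_{1}$. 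The remaining cases II$_{\infty}$ and III follow from the same localization applied to the dichotomies finite/properly infinite and semifinite/purely infinite.

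The main obstacle is not any single algebraic step but the measure-theoretic infrastructure underlying all of them: that a separable $H$ genuinely decomposes as a direct integral over the spectrum of $\mathcal{Z}$; that a von Neumann algebra having the diagonalizable algebra in its center is decomposable with a well-defined measurable field of fibers, and that intersections, commutants, traces, central carriers, and equivalence of measurable fields are computed fiberwise modulo null sets; and that a property holding on almost every fiber lifts to $\mathcal{M}$ by measurable selection. All of this is precisely the content of reduction theory as developed in \cite{vN} and \cite{KR1}, which I would invoke rather than redevelop.
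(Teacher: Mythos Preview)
Your outline is a faithful sketch of the classical reduction theory argument, and the paper itself does not prove this lemma at all: it is stated as a citation from \cite{KR1} with no accompanying proof. So there is nothing to compare---your proposal supplies what the paper deliberately omits, and the steps you indicate (spectral theorem for $\mathcal{Z}$, decomposability of $\mathcal{M}$ and $\mathcal{M}'$, fiberwise computation of the center to get factoriality, and localization of type invariants) are exactly the standard route in \cite{KR1} and \cite{vN}.
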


The following lemma gives a decomposition of a normal sate on a direct integral of von Neumann algebras.
\begin{lemma} \label{2.2}
(\cite{KR1}) If $H$ is the direct integral of separable Hilbert
spaces $\{ H_{s} \}$ over $(X, \mu)$, $\mathcal{M}$ is a
decomposable von Neumann algebra on $H$ (i.e., every operator in $\mathcal{M}$ is decomposable relative to the direct integral decomposition, see Definition 14.1.6 in \cite{KR1}) and $\rho$ is a normal
state on $\mathcal{M}$. There is a positive normal linear functional
$\rho_{s}$ on $\mathcal{M}_{s}$ for every $s \in X$ such that
$\rho(a)=\int_{X} \rho_{s}(a(s))d\mu$ for each $a$ in
$\mathcal{M}$. If $\mathcal{M}$ contains the algebra $\mathcal{C}$
of diagonalizable operators and $\rho \vert_{E\mathcal{M}E}$ is
faithful or tracial, for some projection $E$ in $\mathcal{M}$, then
$\rho_{s} \vert_{E(s)\mathcal{M}_{s}E(s)}$ is, accordingly,
faithful or tracial almost everywhere.
\end{lemma}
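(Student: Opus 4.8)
The plan is to construct the fields $\{\rho_{s}\}$ from a vector‑state representation of $\rho$ and then to derive the faithfulness and traciality refinements by an ``integral $\Rightarrow$ pointwise'' argument. First I would invoke the standard fact that a normal state on a von Neumann algebra $\mathcal M$ acting on $H$ has the form $\rho(a)=\sum_{n}\langle a\xi_{n},\xi_{n}\rangle$ for some sequence $(\xi_{n})$ in $H$ with $\sum_{n}\|\xi_{n}\|^{2}=1$. Writing each $\xi_{n}=\int_{X}^{\oplus}\xi_{n}(s)\,d\mu$ as a measurable vector field and using that for a decomposable $a=\int_{X}^{\oplus}a(s)\,d\mu$ one has $\langle a\xi_{n},\xi_{n}\rangle=\int_{X}\langle a(s)\xi_{n}(s),\xi_{n}(s)\rangle\,d\mu$, I would set, for $T\in\mathcal M_{s}$,
$$\rho_{s}(T)=\sum_{n}\langle T\xi_{n}(s),\xi_{n}(s)\rangle .$$
Each $\rho_{s}$ is then a positive normal functional with $\rho_{s}(I_{H_{s}})=\sum_{n}\|\xi_{n}(s)\|^{2}<\infty$ for a.e.\ $s$; the function $s\mapsto\rho_{s}(a(s))$ is measurable, being a pointwise norm‑convergent sum of the measurable functions $s\mapsto\langle a(s)\xi_{n}(s),\xi_{n}(s)\rangle$; and since
$$\sum_{n}\int_{X}|\langle a(s)\xi_{n}(s),\xi_{n}(s)\rangle|\,d\mu\le\|a\|\sum_{n}\|\xi_{n}\|^{2}<\infty ,$$
the Tonelli--Fubini theorem (a sum against an integral) permits interchanging $\sum_{n}$ and $\int_{X}$, which yields $\rho(a)=\int_{X}\rho_{s}(a(s))\,d\mu$ for every $a\in\mathcal M$.

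For the traciality statement, assume $\mathcal C\subseteq\mathcal M$ (so $\mathcal C\subseteq\mathcal Z_{\mathcal M}$ and $\mathcal M=\int_{X}^{\oplus}\mathcal M_{s}\,d\mu$), and suppose $\rho|_{E\mathcal M E}$ is tracial with $E=\int_{X}^{\oplus}E(s)\,d\mu$. Fix $a\in E\mathcal M E$, so $a(s)\in E(s)\mathcal M_{s}E(s)$ a.e., and put $h(s):=\rho_{s}(a(s)^{*}a(s))-\rho_{s}(a(s)a(s)^{*})$, a real measurable function. For any diagonalizable $g$ with $0\le g\le I$, the element $g^{1/2}a$ still lies in $E\mathcal M E$ because $g$ is central, so $\rho((g^{1/2}a)^{*}(g^{1/2}a))=\rho((g^{1/2}a)(g^{1/2}a)^{*})$; since $g(s)$ is a scalar it pulls through $\rho_{s}$, giving $\int_{X}g(s)h(s)\,d\mu=0$. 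Letting $g$ range over the nonnegative diagonalizable operators forces $h=0$ a.e. Running this simultaneously over a sequence $(a_{m})$ whose fibres $(a_{m}(s))$ are strong‑$*$ dense in the unit ball of $E(s)\mathcal M_{s}E(s)$ for a.e.\ $s$, and using normality of $\rho_{s}$ together with the fact that strong‑$*$ convergence of bounded fibres entails weak‑operator convergence of $a_{m}(s)^{*}a_{m}(s)$, I would conclude $\rho_{s}(x^{*}x)=\rho_{s}(xx^{*})$ for all $x\in E(s)\mathcal M_{s}E(s)$ and a.e.\ $s$, i.e.\ $\rho_{s}|_{E(s)\mathcal M_{s}E(s)}$ is tracial a.e.

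For the faithfulness statement I would argue through support projections. Here I use that $\mathcal M'$ is also decomposable with $\mathcal M'=\int_{X}^{\oplus}\mathcal M_{s}'\,d\mu$, so that, starting from a strong‑$*$ dense sequence in $\mathcal M'$ and the measurable fields $\xi_{n}(s)$, the support projection $p_{s}\in E(s)\mathcal M_{s}E(s)$ of $\rho_{s}|_{E(s)\mathcal M_{s}E(s)}$ is built from measurable data and $s\mapsto p_{s}$ is a measurable projection field. Set $p=\int_{X}^{\oplus}p_{s}\,d\mu\in E\mathcal M E$. Then $\rho(p)=\int_{X}\rho_{s}(p_{s})\,d\mu=\int_{X}\rho_{s}(E(s))\,d\mu=\rho(E)$, so $p$ carries full $\rho$‑weight in $E\mathcal M E$; and if a projection $q=\int_{X}^{\oplus}q(s)\,d\mu\le E$ satisfies $\rho(q)=\rho(E)$, then the pointwise inequality $\rho_{s}(q(s))\le\rho_{s}(E(s))$ combined with equality of the integrals forces $\rho_{s}(q(s))=\rho_{s}(E(s))$ a.e., hence $q(s)\ge p_{s}$ a.e.\ and $q\ge p$. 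Thus $p$ is the support of $\rho|_{E\mathcal M E}$ in $E\mathcal M E$, so $\rho|_{E\mathcal M E}$ is faithful iff $p=E$, iff $p_{s}=E(s)$ a.e., iff $\rho_{s}|_{E(s)\mathcal M_{s}E(s)}$ is faithful a.e.

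The vector‑state representation, the Fubini interchange, and the central‑scalar manipulations are routine. The step I expect to be the main obstacle is the measure theory underlying the passage from integral identities to pointwise‑a.e.\ identities: concretely, the measurability of the support‑projection field $s\mapsto p_{s}$ and the assertion that strong‑$*$ density of a countable subset of a decomposable algebra descends to strong‑$*$ density of its fibres for almost every $s$. Both belong to the standard direct‑integral machinery developed in \cite{vN} and \cite{KR1}, but they are where the argument stops being purely formal and genuinely uses separability and measurable‑selection ideas.
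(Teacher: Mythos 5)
The paper offers no proof of this lemma; it is quoted from \cite{KR1} (Lemma 14.1.19), and the only extra information the paper records about that proof is Remark~\ref{2.3}, namely that when $\rho=\sum_n\omega_{y_n}$ the fibre functionals may be taken to be $\rho_s=\sum_n\omega_{y_n(s)}$. Your proposal reconstructs precisely that construction, and the Tonelli--Fubini justification of $\rho(a)=\int_X\rho_s(a(s))\,d\mu$ is correct. The traciality step (testing $\int_X g(s)h(s)\,d\mu=0$ against all bounded nonnegative diagonalizable $g$ to conclude $h=0$ a.e., then sweeping a countable strong-$*$ dense family of fibres through the unit ball of $E(s)\mathcal M_sE(s)$ and using normality of $\rho_s$) and the faithfulness step (via a measurable field of support projections $p_s$ assembled from a strong-$*$ dense sequence in the decomposable commutant $\mathcal M'$ together with the vector fields $\xi_n(s)$) are the standard direct-integral mechanisms, and you are right to flag the measurability of $s\mapsto p_s$ and the a.e.\ fibrewise density of countable sets as the places where separability of $H$ does real work; both are supplied by \cite{KR1}, \S14.1. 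One point worth making explicit in the traciality part: since $\mathcal C$ consists of decomposable operators with scalar fibres, the inclusion $\mathcal C\subseteq\mathcal M$ forces $\mathcal C\subseteq\mathcal Z_{\mathcal M}$, which is what licenses pulling $g^{1/2}$ through $E\mathcal M E$. Modulo those standard citations, the argument is complete and follows the route of the cited source.
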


\begin{remark} \label{2.3}
 From the proof of Lemma 14.1.19 in \cite{KR1}, we obtain that if
$\rho=\sum\limits_{n=1}^{\infty}\omega_{y_{n}}$ on $\mathcal{M}$,
where $\{ y_{n} \}$ is a sequence of vectors in $H$ such that
$\sum\limits_{n=1}^{\infty}\Vert y_{n} \Vert ^{2}=1$ and
$\omega_{y}$ is defined on $\mathcal{M}$ such that
$\omega_{y}(a)=\langle ay, y \rangle$ for any $a \in \mathcal{M}, y
\in H$, then $\rho_{s}$ can be chosen to be
$\sum\limits_{n=1}^{\infty} \omega_{y_{n}(s)}$ for each $s \in X$.
\end{remark}

\section{Some equivalent definitions of type II$_1$ von Neumann algebras with Property $\Gamma$}
In this section, we will give some equivalent definitions of
Property $\Gamma$ for type II$_1$ von Neumann algebras.

Let us recall the following definition of Property $\Gamma$ for
general type II$_{1}$ von Neumann algebras   in \cite{QS}. {\em
Suppose $\mathcal{M}$ is a type II$_{1}$ von Neumann algebra with a
predual $\mathcal M_{\sharp}$. Suppose   $\sigma (\mathcal M,
\mathcal M_\sharp)$ is the weak-$*$ topology on $\mathcal M$ induced
from $\mathcal M_\sharp$. We say that $\mathcal{M}$ has Property
$\Gamma$ if and only if $\forall \ a_{1}, a_{2}, \dots, a_{k} \in
\mathcal{M}$ and $\forall \ n\in \Bbb N$, there exist a partially
ordered set $\Lambda$ and a family of projections $$\{ p_{i
\lambda}: 1\le i\le n; \lambda \in \Lambda \}\subseteq \mathcal{M}$$
satisfying
\begin{enumerate}
\item [(i)] For each $\lambda \in \Lambda$,   $p_{1 \lambda}, p_{2 \lambda}, \dots,
p_{n \lambda}$ are mutually orthogonal equivalent projections in $\mathcal M$ with sum $I$.
\item [(ii)] For each $1\le i\le n$ and $1\le j\le k$,
$$
\lim_{ \lambda } (p_{i \lambda}a_{j}-a_{j}p_{i \lambda})^*(p_{i \lambda}a_{j}-a_{j}p_{i \lambda}) =0 \qquad \text {in $\sigma(\mathcal M, \mathcal M_\sharp)$ topology.}$$
\end{enumerate}}

The following two lemmas are well-known. We include their proofs here for the purpose of completeness.
\begin{lemma}\label{3.0.5}
Suppose that $\mathcal M$ is a type II$_1$ von Neumann algebra. Then
the following are true.
\begin{enumerate}
\item [(a)] For any nonzero element $x\in\mathcal M$, there exists a normal tracial state $\rho$ on $\mathcal M$ such that $\rho(x^*x)\ne 0$.
\item [(b)] There exists a non-zero central projection $q$ of $\mathcal M$, such that $q\mathcal
M$ is a countably decomposable type II$_1$ von Neumann algebra. \end{enumerate}
\end{lemma}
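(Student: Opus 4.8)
For part (a), the plan is to use the fact that a type II$_1$ von Neumann algebra $\mathcal M$ carries a center-valued trace $\tau$ taking values in the center $\mathcal Z$. Given a nonzero $x \in \mathcal M$, the element $x^*x$ is a nonzero positive element, so $\tau(x^*x)$ is a nonzero positive element of $\mathcal Z$ (faithfulness of the center-valued trace). Pick a nonzero spectral projection $q$ of $\tau(x^*x)$ corresponding to an interval $[\delta,\infty)$ for some $\delta>0$, so that $q\tau(x^*x) \ge \delta q$. Since $q\mathcal Z$ is an abelian von Neumann algebra and hence has a normal state $\phi$, the composition $\rho(\cdot) = \phi(q\tau(\cdot))$ (extended by $0$ off $q\mathcal M$, or simply using $\phi(\tau(\cdot))$ after multiplying by $q$) is a normal tracial state on $\mathcal M$ with $\rho(x^*x) = \phi(q\tau(x^*x)) \ge \delta\phi(q) = \delta > 0$. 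The tracial property follows from $\tau(ab)=\tau(ba)$; normality follows from normality of $\tau$ and $\phi$; positivity is clear.

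For part (b), the plan is to reduce to the countably decomposable case via an exhaustion/maximality argument. Consider the collection of families of nonzero orthogonal central projections $\{q_\alpha\}$ in $\mathcal M$ such that each $q_\alpha \mathcal M$ is countably decomposable; order such families by inclusion and apply Zorn's lemma to obtain a maximal such family. First one must check this collection is nonempty: given any nonzero central projection, one can use part (a) to produce a normal tracial state $\rho$ that is nonzero somewhere, pass to the support projection of $\rho$ within the center, and observe that a von Neumann algebra admitting a faithful normal state is countably decomposable — so some nonzero central cutdown is countably decomposable. Then, if $\{q_\alpha\}$ is a maximal family and $q = \sum_\alpha q_\alpha \ne I$, apply the same argument to $(I-q)\mathcal M$ to find a nonzero countably decomposable central cutdown, contradicting maximality; hence $\sum_\alpha q_\alpha = I$. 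In particular the family is nonempty, so at least one $q = q_{\alpha_0}$ works, giving the statement as phrased. (Alternatively, one can directly invoke that a finite von Neumann algebra is a direct sum of countably decomposable pieces.)

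The main obstacle is the bookkeeping in part (b): making precise that "admits a faithful normal state $\Rightarrow$ countably decomposable" and that one can always split off a piece with a faithful normal state. The cleanest route is: for a nonzero central projection $e$, part (a) gives a normal tracial state on $e\mathcal M$; its central support $s$ is a nonzero central subprojection of $e$, and the restricted state is faithful on $s\mathcal M$, whence $s\mathcal M$ is countably decomposable. With that lemma in hand the Zorn's-lemma exhaustion is routine. Everything else — faithfulness of $\tau$, existence of normal states on abelian von Neumann algebras, normality of compositions — is standard and needs no detailed computation.
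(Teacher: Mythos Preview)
Your proposal is correct and follows essentially the same route as the paper: for (a) both compose the faithful center-valued trace with a suitably chosen normal state on the center, and for (b) both produce the desired $q$ as the (necessarily central) support projection of a normal tracial state and then use that a faithful normal state forces countable decomposability---the paper just makes the support-projection step explicit via the GNS kernel. Your Zorn's-lemma exhaustion is more than (b) requires, but it is harmless and in fact is exactly what the paper proves separately as Lemma~\ref{3.0}.
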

\begin{proof} Assume that $\mathcal M$ acts on a Hilbert space
$H$.

(a). Let $\mathcal Z$ be the center of $\mathcal M$ and
$\tau$ be the unique, normal, faithful, center-valued trace on $\mathcal M$
such that $\tau(a)=a$ for all $a\in \mathcal Z$ (see Theorem 8.2.8
in \cite{KR1}). Let $x\in \mathcal M$ be a non-zero element. Then, from the fact that $\tau$ is faithful,
we know that $\tau(x^*x)\ne 0$. Let $\hat \rho$ be a normal state on $\mathcal Z$ such that $\hat \rho(\tau(x^*x))\ne 0$.
Now the normal state $\hat \rho$ on $\mathcal Z$
can be extended to a normal tracial state $\rho$ on $\mathcal M$ by
defining $\rho(a)=\hat\rho (\tau(a))$ for all $a\in \mathcal M$.  Therefore $\rho$ is  a normal tracial state  on $\mathcal M$ such that $\rho(x^*x)\ne 0$.

(b). Let $\rho$ be a normal tracial state on $\mathcal{M}$ and
$\mathcal{I} = \{ a \in \mathcal{M}: \rho(a^*a) =0 \}$. Thus
$\mathcal{I}$ is a $2$-sided ideal in $\mathcal{M}$. Let $(H_{\rho}, \pi_{\rho}, x_{\rho})$ be the triple obtained from the GNS construction of
$\rho$ such that $H_{\rho}$ is a Hilbert space, $\pi_{\rho}: \mathcal{M} \to B(H_{\rho})$ is a $*$-representation, $x_{\rho} \in B(H_{\rho})$ is a cyclic vector for $\pi_{\rho}$ satisfying that,
for every $a \in \mathcal{M}$, $\rho(a) = \langle \pi_{\rho}(a)x_{\rho}, x_{\rho} \rangle$. It follows Proposition III.3.12
in \cite{Ta} that $\pi_{\rho}: \mathcal{M} \to B(H_{\rho})$ is a normal representation. Combining with the fact that $\mathcal{I} $ is a two-sided ideal, we can  check that $\mathcal{I} = \ker(\pi_{\rho})$.
Therefore $\mathcal I$ is   closed in $\mathcal M$ in ultraweak
operator topology. By Proposition 1.10.5 in \cite{Sa}, there exists
a central projection $q$ in $\mathcal Z$ such that $\mathcal
I=(1-q)\mathcal M$.

Now we claim that $q\mathcal M$ is countably decomposable. Suppose
that there is a collection of nonzero orthogonal projections $\{
q_{\alpha}: \alpha \in J \}$ in $q\mathcal M$ such that $q=
\sum_\alpha{q_{\alpha}}$.  Since $\rho$ is a normal tracial state on
$\mathcal M$, we know that $\rho(q)=\sum_{\alpha} \rho(q_\alpha)$. By the
definition of the ideal $\mathcal I$ and the choice of the central
projection $q$, we get that $\rho(q_{\alpha}) > 0$ for each $\alpha \in
J$. Now $1=\rho(q+(I-q))=\rho(q)=\sum_{\alpha  } \rho(q_{\alpha})$, where
$I$ is the identity of $\mathcal M$. It follows that $J$ is a
countable set and thus $q \mathcal{M}$ is countably decomposable.
\end{proof}

\begin{lemma}\label{3.0}
Suppose $\mathcal{M}$ is a type II$_{1}$ von Neumann algebra. Then
there is a family of orthogonal central projections $\{ q_{\alpha}:
\alpha \in \Omega\}$ in $\mathcal{M}$ with sum $I$ such that
$q_{\alpha}\mathcal{M}$ is countably decomposable for each $\alpha
\in \Omega$.
\end{lemma}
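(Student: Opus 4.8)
The plan is to run the standard maximality argument, using Lemma \ref{3.0.5}(b) as the engine. Consider the collection $\mathcal{P}$ of all families $\mathcal{F}$ of pairwise orthogonal nonzero central projections of $\mathcal{M}$ with the property that $p\mathcal{M}$ is countably decomposable for every $p \in \mathcal{F}$, ordered by inclusion. This collection is nonempty: by Lemma \ref{3.0.5}(b) there is at least one nonzero central projection $q$ with $q\mathcal{M}$ countably decomposable, so $\{q\} \in \mathcal{P}$ (and $\emptyset \in \mathcal{P}$ in any case). The union of any chain in $\mathcal{P}$ is again a member of $\mathcal{P}$, since pairwise orthogonality and the corner condition are inherited by unions of chains. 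Hence Zorn's Lemma supplies a maximal element $\{ q_{\alpha} : \alpha \in \Omega \}$ of $\mathcal{P}$.

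Next I would set $q = \sum_{\alpha \in \Omega} q_{\alpha}$, the supremum of the $q_{\alpha}$ computed inside $\mathcal{M}$, which coincides with the strong-operator limit of the finite partial sums because the $q_{\alpha}$ are pairwise orthogonal; as a supremum of central projections, $q$ is itself a central projection of $\mathcal{M}$. The claim is that $q = I$. Suppose not; then $e := I - q$ is a nonzero central projection of $\mathcal{M}$, and $e\mathcal{M}$ is again a type II$_{1}$ von Neumann algebra. Its center is $e\mathcal{Z}$, where $\mathcal{Z}$ is the center of $\mathcal{M}$, so every central projection of $e\mathcal{M}$ is a central projection of $\mathcal{M}$ lying under $e$. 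Applying Lemma \ref{3.0.5}(b) to the type II$_{1}$ von Neumann algebra $e\mathcal{M}$, we obtain a nonzero projection $q_{0}$ in the center of $e\mathcal{M}$ such that $q_{0}(e\mathcal{M}) = q_{0}\mathcal{M}$ is countably decomposable. Then $q_{0}$ is a nonzero central projection of $\mathcal{M}$ orthogonal to every $q_{\alpha}$, so $\{ q_{\alpha} : \alpha \in \Omega \} \cup \{ q_{0} \}$ is a strictly larger element of $\mathcal{P}$, contradicting maximality. Therefore $q = I$, and $\{ q_{\alpha} : \alpha \in \Omega \}$ is the desired family.

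The argument is essentially routine, and I do not expect a genuine obstacle. The only points that deserve a word of justification are that a corner $e\mathcal{M}$ of a type II$_{1}$ von Neumann algebra by a nonzero central projection $e$ is again of type II$_{1}$ (immediate from the structure theory of finite von Neumann algebras: the corner is finite and has no type I direct summand), and that the center of $e\mathcal{M}$ is $e\mathcal{Z}$, which is what makes the projection produced by Lemma \ref{3.0.5}(b) inside $e\mathcal{M}$ a genuine central projection of the ambient algebra $\mathcal{M}$.
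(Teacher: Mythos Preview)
Your proposal is correct and follows essentially the same approach as the paper's own proof: both apply Zorn's Lemma to a maximal orthogonal family of nonzero central projections with countably decomposable corners, and then invoke Lemma~\ref{3.0.5}(b) on the complementary central corner to derive a contradiction with maximality. Your write-up is slightly more explicit about why $(I-q)\mathcal{M}$ is again of type II$_{1}$ and why a central projection of the corner is central in $\mathcal{M}$, but the argument is the same.
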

\begin{proof}

By Lemma \ref{3.0.5} and Zorn's lemma, there exists  an orthogonal
family $\{q_\alpha\}$ of non-zero central projections in $\mathcal
M$, which is maximal with respect to the property that
$q_\alpha\mathcal M$ is countable decomposable for each $\alpha$.
Let $Q=\sum q_{\alpha}$. We claim that $Q=I$, where $I$ is the
identity of $\mathcal M$. Assume, to the contrary, that $Q\ne I$.
Then by Lemma \ref{3.0.5}, there is a nonzero central projection $q$
in $(I-Q) \mathcal{M}$ such that $q \mathcal{M}$ is countably
decomposable. The existence of such $q$ contradicts with the
maximality of the family $\{q_\lambda\}$. Therefore $I=\sum
q_\alpha$ and the proof of the lemma is completed.
\end{proof}

\begin{remark} \label{rem 3.1}
Suppose $\mathcal{M}$ is a type II$_{1}$ von Neumann algebra with Property $\Gamma$.  Let
$  q$ be a central
projection of $\mathcal{M}$.  Then it follows
directly from the definition of Property $\Gamma$ that  $q
\mathcal{M}$ also has Property $\Gamma$.
\end{remark}

\begin{lemma} \label{lem 3.1}
Let $\mathcal{M}$ be a type II$_{1}$ von Neumann algbera acting on a
separable Hilbert space $H$ and $\mathcal{Z}_{\mathcal{M}}$ the center
of $\mathcal{M}$. Let $\tau$ be the center-valued trace on
$\mathcal{M}$ such that $\tau(z)=z$ for any $z \in
\mathcal{Z}_{\mathcal{M}}$. Let $\mathcal{M} = \int_{X} \bigoplus
\mathcal{M}_{s} d \mu$ and $H=\int_{X} \bigoplus H_{s} d \mu$ be the
direct integral decompositions of $\mathcal{M}$ and $H$ relative to
$\mathcal Z_{\mathcal M}$ as in Lemma \ref{2.1}. Assume that $\mathcal{M}_{s}$ is a type II$_{1}$
factor with a trace $\tau_{s}$ for each $s \in X$. Then for any $a
\in \mathcal{M}$,
$$\tau(a) (s) = \tau_{s}(a(s)) I_{s}$$
for almost every $s \in X$.
\end{lemma}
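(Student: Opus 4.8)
The plan is to reduce the statement to the Dixmier Approximation Theorem (Lemma \ref{Dixmier}) together with the fact that convex combinations, unitaries, products and operator norms all behave pointwise under the direct integral decomposition. Fix $a \in \mathcal{M}$. Since $\tau(a)$ lies in the center $\mathcal{Z}_{\mathcal{M}}$, and by Lemma \ref{2.1} the center is precisely the algebra of diagonalizable operators relative to this decomposition, we may write $\tau(a)(s) = g(s) I_s$ for some $g \in L^{\infty}(X,\mu)$; the goal becomes to show that $g(s) = \tau_s(a(s))$ for almost every $s \in X$. Throughout we normalize the given traces so that $\tau_s(I_s) = 1$, which is forced since the tracial state of a type II$_1$ factor is unique and since $\tau(I) = I$ forces $\tau_s(I_s) = 1$ at the end anyway.

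First I would invoke Lemma \ref{Dixmier}: $\tau(a)$ belongs to the norm closure of $\mathrm{conv}_{\mathcal{M}}(a)$, so there is a sequence $A_n = \sum_{j=1}^{k_n} \lambda_{n,j}\, u_{n,j}\, a\, u_{n,j}^{*}$, with $\lambda_{n,j} \ge 0$, $\sum_j \lambda_{n,j} = 1$ and each $u_{n,j}$ a unitary in $\mathcal{M}$, such that $\Vert A_n - \tau(a) \Vert \to 0$. For each fixed $n$ there is a $\mu$-null set outside of which $A_n(s) = \sum_j \lambda_{n,j}\, u_{n,j}(s)\, a(s)\, u_{n,j}(s)^{*}$ with every $u_{n,j}(s)$ a unitary in the factor $\mathcal{M}_s$; since $\tau_s$ is invariant under unitary conjugation and is linear, this gives $\tau_s(A_n(s)) = \tau_s(a(s))$ there. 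Taking the union over $n$ of these null sets produces a single null set $N_1$ such that $\tau_s(A_n(s)) = \tau_s(a(s))$ for all $n$ and all $s \notin N_1$.

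Next, using the identity $\Vert T \Vert = \mathrm{ess\,sup}_{s \in X}\, \Vert T_s \Vert$ for a decomposable operator $T = \int_X^{\oplus} T_s\, d\mu$, the convergence $\Vert A_n - \tau(a) \Vert \to 0$ yields, for each $n$, a null set outside of which $\Vert A_n(s) - g(s) I_s \Vert \le \Vert A_n - \tau(a) \Vert$; let $N_2$ be the union of these null sets. For $s \notin N_1 \cup N_2$ we then have $A_n(s) \to g(s) I_s$ in norm in $\mathcal{M}_s$, and since $\tau_s$ is norm-continuous (it is a state, so $|\tau_s(x)| \le \Vert x \Vert$), we get $\tau_s(A_n(s)) \to \tau_s(g(s) I_s) = g(s)$. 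On the other hand $\tau_s(A_n(s)) = \tau_s(a(s))$ is independent of $n$, so $\tau_s(a(s)) = g(s)$ for all $s \notin N_1 \cup N_2$, i.e. $\tau(a)(s) = \tau_s(a(s))\, I_s$ almost everywhere, which is the claim.

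The only genuinely delicate points are bookkeeping ones: one must check that the countably many "almost everywhere" statements coming from direct-integral theory — that $a$, the $u_{n,j}$ and the sums/products forming $A_n$ are decomposable with the expected fibres, and the essential-supremum norm identity — can be assembled into a single null set, which is routine (see \cite{KR1}, Chapter 14). An essentially equivalent alternative would be to argue directly that $s \mapsto \tau_s(a(s))$ defines an element $\Phi(a) \in \mathcal{Z}_{\mathcal{M}}$, check that $\Phi$ is linear, positive, unital, satisfies $\Phi(xy) = \Phi(yx)$ and $\Phi(za) = z\Phi(a)$ for central $z$, and then appeal to uniqueness of the center-valued trace (Theorem 8.2.8 in \cite{KR1}); but this route needs the same measurability input, so the argument via Lemma \ref{Dixmier} above is the more economical one, and I expect the measurability/null-set bookkeeping to be the main (though minor) obstacle.
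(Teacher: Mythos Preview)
Your proof is correct and follows essentially the same route as the paper: both invoke the Dixmier Approximation Theorem in $\mathcal{M}$ to get a sequence of convex unitary averages converging in norm to $\tau(a)$, then pass to fibres via the ess-sup norm identity (Proposition 14.1.9 in \cite{KR1}) to obtain fibrewise norm convergence almost everywhere. The only cosmetic difference is in the last step: the paper re-applies Dixmier in each factor $\mathcal{M}_s$ to identify the limit $\tau(a)(s)$ as $\tau_s(a(s))I_s$, whereas you apply $\tau_s$ directly to the convergent sequence and use its norm-continuity together with the trace identity $\tau_s(A_n(s))=\tau_s(a(s))$; both conclusions are immediate and equivalent.
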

\begin{proof}
Fix $a \in \mathcal{M}$. By the Dixmier Approximation Theorem, for each $t \in \mathbb{N}$, there exist a positive integer $k_{t}$, a family of unitaries $\{ v_{j}^{(t)}: t \in \mathbb{N}, 1 \le j \le k_{t} \}$ in $\mathcal M$ and scalars $\{ \lambda_{j}^{(t)}: t \in \mathbb{N}, 1 \le j \le k_{t} \} \subseteq [0, 1]$ such that
\begin{enumerate}
\item[(i)] for each $t \in \mathbb{N}$, $\sum\limits_{1 \le j \le k_{t}} \lambda_{j}^{(t)} =1$;
\item[(ii)] $\lim\limits_{ t \to \infty} \Vert  \sum\limits_{1 \le j \le k_{t}} \lambda_{j}^{(t)} (v_{j}^{(t)})^* a v_{j}^{(t)} - \tau (a) \Vert=0$.
\end{enumerate}

Since $\{ v_{j}^{(t)}: t \in \mathbb{N}, 1 \le j \le k_{t} \}$ is a
countable set, we may assume that, for every $s \in X$,
$v_{j}^{(t)}(s)$ is  a unitary in $\mathcal{M}_{s}$ for any $t \in
\mathbb{N}$ and any $1 \le j \le k_{t}$. By Proposition 14.1.9 in
\cite{KR1}, for any $t \in \mathbb{N}$,  we have $$\Vert
\sum\limits_{1 \le j \le k_{t}} \lambda_{j}^{(t)} (v_{j}^{(t)})^* a
v_{j}^{(t)} - \tau (a) \Vert=\text{ess-}\sup_{s\in X} \Vert  \sum\limits_{1
\le j \le k_{t}} \lambda_{j}^{(t)} (v_{j}^{(t)}(s))^* a(s)
v_{j}^{(t)}(s) - \tau (a) (s) \Vert .$$ It follows that
\begin{eqnarray}
 \lim\limits_{ t \to \infty} \Vert  \sum\limits_{1 \le j \le k_{t}} \lambda_{j}^{(t)} (v_{j}^{(t)}(s))^* a(s) v_{j}^{(t)}(s) - \tau (a) (s) \Vert=0 \label{rem 3.1.1}
\end{eqnarray}
for almost every $s \in X$. Again, by the Dixmier Approximation
Theorem and the fact that each $\mathcal M_s$ is a type II$_1$ factor, (\ref{rem 3.1.1}) gives that $$\tau(a) (s) = \tau_{s}(a(s))
I_{s}$$ for almost every $s \in X$.
\end{proof}

\begin{lemma} \label{lem 3.2}
Let $\mathcal{M}$ be a type II$_{1}$ von Neumann algebra with center $\mathcal{Z}_{\mathcal{M}}$. Let $\tau$ be the center-valued trace on $\mathcal{M}$ such that $\tau(a)=a$ for any $a \in \mathcal{Z}_{\mathcal{M}}$. Suppose $\epsilon >0$, $x \in \mathcal{M}$ and $\tau(x^*x) < \epsilon I$. Then for any tracial state $\rho$ on $\mathcal{M}$,
$$\rho(x^*x) < 2\epsilon.$$
\end{lemma}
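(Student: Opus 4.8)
The plan is to derive the estimate from the Dixmier Approximation Theorem (Lemma~\ref{Dixmier}) together with nothing more than the positivity and traciality of $\rho$. The one genuine subtlety is that an \emph{arbitrary} tracial state $\rho$ on $\mathcal{M}$ need not be normal, so one cannot legitimately write $\rho=\hat\rho\circ\tau$ for a state $\hat\rho$ on the center $\mathcal{Z}_{\mathcal{M}}$ and thereby read off $\rho(x^*x)\le\epsilon$ directly from $\tau(x^*x)<\epsilon I$. The point of invoking Dixmier's theorem is precisely to bypass this: it approximates $\tau(x^*x)$ by a \emph{finite} convex combination of unitary conjugates of $x^*x$, and a finite convex combination is handled by any state, normal or not. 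This is the step I expect to be the only real obstacle; everything else is bookkeeping.

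First I would note that $\tau(x^*x)<\epsilon I$ forces $\|\tau(x^*x)\|\le\epsilon$, and that $\tau(x^*x)$ is a positive element of $\mathcal{M}$ (the center-valued trace is positive). Fix $\delta>0$. Since a type~II$_1$ von Neumann algebra is finite and $x^*x\in\mathcal{M}$, Lemma~\ref{Dixmier} gives $\tau(x^*x)\in conv_{\mathcal M}(x^*x)^{=}$, so there are unitaries $u_1,\dots,u_m\in\mathcal{M}$ and scalars $\lambda_1,\dots,\lambda_m\ge 0$ with $\sum_{i=1}^{m}\lambda_i=1$ and
$$\Bigl\|\sum_{i=1}^{m}\lambda_i\, u_i x^*x\, u_i^{*}-\tau(x^*x)\Bigr\|<\delta.$$
The element $\sum_i\lambda_i u_i x^*x u_i^{*}-\tau(x^*x)$ is self-adjoint of norm $<\delta$, hence $\le\delta I$, which yields
$$\sum_{i=1}^{m}\lambda_i\, u_i x^*x\, u_i^{*}\;\le\;\tau(x^*x)+\delta I\;\le\;\bigl(\|\tau(x^*x)\|+\delta\bigr)I\;\le\;(\epsilon+\delta)I.$$

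Finally I would apply $\rho$ to this operator inequality. Positivity of $\rho$ gives $\rho\bigl(\sum_i\lambda_i u_i x^*x u_i^{*}\bigr)\le\epsilon+\delta$, while traciality gives $\rho(u_i x^*x u_i^{*})=\rho(x^*x u_i^{*}u_i)=\rho(x^*x)$ for each $i$, so the left-hand side equals $\sum_i\lambda_i\rho(x^*x)=\rho(x^*x)$. Hence $\rho(x^*x)\le\epsilon+\delta$ for every $\delta>0$, so $\rho(x^*x)\le\epsilon<2\epsilon$, which is stronger than the asserted bound (the factor $2$ in the statement is slack; the argument in fact delivers $\rho(x^*x)\le\epsilon$).
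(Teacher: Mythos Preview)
Your proof is correct and follows essentially the same route as the paper's: invoke the Dixmier Approximation Theorem to replace $\tau(x^*x)$ by a finite convex combination of unitary conjugates of $x^*x$, then use traciality of $\rho$ to collapse that combination back to $\rho(x^*x)$. Your version is in fact tighter---by taking the Dixmier tolerance to be an arbitrary $\delta$ rather than $\epsilon$ you obtain $\rho(x^*x)\le\epsilon$, whereas the paper fixes the tolerance at $\epsilon$ and is content with the bound $2\epsilon$.
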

\begin{proof}
Note that $\tau(x^*x) < \epsilon I$. It follows from the Dixmier Approximation Theorem that there exist a positive integer $n \in \mathbb{N}$, a family of unitaries $\{ v_{1}, v_{2}, \dots, v_{n} \}$ in $\mathcal{M}$ and a family of scalars $\{ \alpha_{1}, \alpha_{2}, \dots, \alpha_{n} \} \subseteq [0, 1]$ such that
\begin{enumerate}
\item[(a)] $\sum\limits_{1 \le i \le n} \alpha_{i} =1$;
\item[(b)] $\Vert\tau(x^*x)- \sum\limits_{1 \le i \le n} \alpha_{i} v_{i}^* x^*x v_{i} \Vert < \epsilon$.
\end{enumerate}

Since $\rho$ is tracial, it follows from (a) and (b) that
$$\begin{aligned}\rho(x^*x) &=  \rho (\sum\limits_{1 \le i \le n} \alpha_{i} v_{i}^* x^*x v_{i})
\\ &= \rho
(\sum\limits_{1 \le i \le n} \alpha_{i} v_{i}^* x^*x
v_{i})-\tau(x^*x))+\rho(\tau(x^*x)) \\ &< 2\epsilon.\end{aligned}$$
The proof is completed.
\end{proof}

\begin{proposition} \label{3.2}
Suppose $\mathcal{M}$ is a type II$_{1}$ von Neumann algebra acting
on a separable Hilbert space $H$. Let $\tau$ be the center-valued
trace on $\mathcal{M}$ such that $\tau(a)=a$ for any $a \in
\mathcal{Z}_{\mathcal{M}}$, where $\mathcal{Z}_{\mathcal{M}}$ is the
center of $\mathcal{M}$.
Suppose that $\mathcal{M}$ has Property $\Gamma$.
Then, for $a_{1}, a_{2}, \dots, a_{k} \in \mathcal{M}$, any $n \in
\mathbb{N}$, any $\epsilon >0$, there exist $n$ orthogonal
equivalent projections $p_{1}, p_{2}, \dots, p_{n}$ in $\mathcal{M}$
with sum $I$ such that
$$\tau((p_{i}a_{j}-a_{j}p_{i})^*(p_{i}a_{j}-a_{j}p_{i})) < \epsilon I, \qquad \forall \quad 1 \le i \le n, 1 \le j \le k.$$
\end{proposition}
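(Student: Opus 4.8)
The plan is to close the gap between the weak-$*$ convergence that Property $\Gamma$ delivers and the \emph{uniform} center-valued estimate $\tau(\,\cdot\,)<\epsilon I$ being asked for, and the mechanism for this is a ``central patching'' of tuples of projections. I first record this mechanism. Suppose $\{q_{m}\}_{m\in\mathbb N}$ is an orthogonal family of central projections of $\mathcal M$ with $\sum_{m}q_{m}=I$ and, for each $m$, $(p_{1}^{(m)},\dots,p_{n}^{(m)})$ is an $n$-tuple of mutually orthogonal equivalent projections in $\mathcal M$ with sum $I$. Then $p_{i}:=\sum_{m}q_{m}p_{i}^{(m)}$ $(1\le i\le n)$ is again such a tuple: orthogonality and the projection relations are immediate since the $q_{m}$ are central and mutually orthogonal, $\sum_{i}p_{i}=\sum_{m}q_{m}\sum_{i}p_{i}^{(m)}=\sum_{m}q_{m}=I$, and equivalence follows by patching the implementing partial isometries in the same way. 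Moreover, centrality of the $q_{m}$ gives $p_{i}a_{j}-a_{j}p_{i}=\sum_{m}q_{m}(p_{i}^{(m)}a_{j}-a_{j}p_{i}^{(m)})$, and therefore, using that $\tau$ is normal and $\mathcal Z_{\mathcal M}$-linear,
\[
\tau\bigl((p_{i}a_{j}-a_{j}p_{i})^{*}(p_{i}a_{j}-a_{j}p_{i})\bigr)=\sum_{m}q_{m}\,\tau\bigl((p_{i}^{(m)}a_{j}-a_{j}p_{i}^{(m)})^{*}(p_{i}^{(m)}a_{j}-a_{j}p_{i}^{(m)})\bigr).
\]

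Now fix $a_{1},\dots,a_{k}$, $n$, $\epsilon>0$, and put $\epsilon'=\epsilon/2$. Since $H$ is separable, $\mathcal M$, and hence its center $\mathcal Z_{\mathcal M}$, is countably decomposable, so there is a faithful normal state $\hat\rho$ on $\mathcal Z_{\mathcal M}$. By Property $\Gamma$ there is a net $\{(p_{1\lambda},\dots,p_{n\lambda})\}_{\lambda}$ of $n$-tuples of mutually orthogonal equivalent projections with sum $I$ such that $(p_{i\lambda}a_{j}-a_{j}p_{i\lambda})^{*}(p_{i\lambda}a_{j}-a_{j}p_{i\lambda})\to 0$ in $\sigma(\mathcal M,\mathcal M_{\sharp})$ for all $i,j$; since $\tau$ is normal, applying $\tau$ and summing over $i,j$ shows that the nonnegative central elements
\[
Y_{\lambda}:=\sum_{i,j}\tau\bigl((p_{i\lambda}a_{j}-a_{j}p_{i\lambda})^{*}(p_{i\lambda}a_{j}-a_{j}p_{i\lambda})\bigr)
\]
tend to $0$ weak-$*$ in $\mathcal Z_{\mathcal M}$, hence $\hat\rho(Y_{\lambda})\to 0$. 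Choose, for each $m\in\mathbb N$, an index $\lambda_{m}$ with $\hat\rho(Y_{\lambda_{m}})<2^{-m}\epsilon'$, and abbreviate $p_{i}^{(m)}=p_{i\lambda_{m}}$, $Y_{m}=Y_{\lambda_{m}}$. Let $e_{m}\in\mathcal Z_{\mathcal M}$ be the spectral projection of $Y_{m}$ for $[\epsilon',\infty)$; then $\epsilon'e_{m}\le Y_{m}$ (so $\hat\rho(e_{m})\le 2^{-m}$) and $(I-e_{m})Y_{m}\le\epsilon'(I-e_{m})$. Set $f_{0}=I$, $f_{m}=e_{1}\wedge\cdots\wedge e_{m}$, and $q_{m}=f_{m-1}-f_{m}$. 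The $f_{m}$ decrease and $\hat\rho(f_{m})\le\hat\rho(e_{m})\to 0$, so by faithfulness and normality of $\hat\rho$ we get $\bigwedge_{m}f_{m}=0$; consequently $\{q_{m}\}_{m\ge 1}$ is an orthogonal family of central projections with $\sum_{m}q_{m}=I$.

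Patch the tuples $(p_{1}^{(m)},\dots,p_{n}^{(m)})$ along $\{q_{m}\}$ as in the first paragraph to obtain an $n$-tuple $(p_{1},\dots,p_{n})$ of mutually orthogonal equivalent projections with sum $I$. For each $m$ and each $i,j$ we have $q_{m}\le f_{m-1}\wedge(I-e_{m})\le I-e_{m}$ and $0\le\tau\bigl((p_{i}^{(m)}a_{j}-a_{j}p_{i}^{(m)})^{*}(p_{i}^{(m)}a_{j}-a_{j}p_{i}^{(m)})\bigr)\le Y_{m}$, so, all central elements commuting,
\[
q_{m}\,\tau\bigl((p_{i}^{(m)}a_{j}-a_{j}p_{i}^{(m)})^{*}(p_{i}^{(m)}a_{j}-a_{j}p_{i}^{(m)})\bigr)\le q_{m}(I-e_{m})Y_{m}\le\epsilon' q_{m},
\]
and summing over $m$ via the patching formula yields $\tau\bigl((p_{i}a_{j}-a_{j}p_{i})^{*}(p_{i}a_{j}-a_{j}p_{i})\bigr)\le\epsilon' I<\epsilon I$ for all $i,j$, which is the assertion. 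The crux of the argument — and the only real difficulty — is exactly this passage from a weak-$*$ null net to a norm-in-the-center bound: a weak-$*$ null net of positive operators need not be eventually small in operator norm, but the faithful normal state on the center forces the ``bad'' central support projections $e_{m}$ to shrink, so that their decreasing meets vanish and central patching can glue countably many members of the $\Gamma$-net into a single uniformly good tuple.
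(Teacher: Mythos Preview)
Your argument is correct and takes a genuinely different route from the paper's proof. The paper works externally: it disintegrates $\mathcal M$ over its center as $\int_X^\oplus \mathcal M_s\,d\mu$ with each $\mathcal M_s$ a II$_1$ factor, invokes Corollary~4.2 of \cite{QS} to produce a single tuple $(p_1,\dots,p_n)$ whose fibrewise commutators satisfy $\|p_i(s)a_j(s)-a_j(s)p_i(s)\|_{2,s}\le\epsilon/2$ for almost every $s$, and then identifies $\tau(\,\cdot\,)(s)$ with $\tau_s(\,\cdot\,)I_s$ (Lemma~\ref{lem 3.1}) so that the essential-sup bound yields $\|\tau(\text{commutator}^*\text{commutator})\|\le\epsilon/2$. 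Your proof, by contrast, is entirely intrinsic: you never leave $\mathcal M$ or its center, you use only the raw definition of Property~$\Gamma$, normality of $\tau$, and basic spectral theory in the abelian von Neumann algebra $\mathcal Z_{\mathcal M}$ to manufacture an orthogonal central family $\{q_m\}$ along which one can patch members of the $\Gamma$-net. What the paper's route buys is brevity once the direct-integral machinery and the cited result from \cite{QS} are in hand; what yours buys is self-containment---no disintegration, no external black box---and a technique (central patching controlled by a faithful normal state on the center) that is portable to other ``weak-$*$ null to uniform-in-center'' upgrades.
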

\begin{proof}


 Suppose $\mathcal{M}$ has Property $\Gamma$.
Let $\mathcal{M} = \int_{X} \bigoplus \mathcal{M}_{s} d \mu$ and
$H=\int_{X} \bigoplus H_{s} d \mu$ be the direct integral
decompositions of $\mathcal{M}$ and $H$ relative to the center $\mathcal Z_{\mathcal M}$ as in Lemma \ref{2.1}.
We might assume that $\mathcal{M}_{s}$ is a type II$_{1}$ factor
with a trace $\tau_{s}$ for each $s \in X$.

 Fix $a_{1}, a_{2}, \dots, a_{k} \in \mathcal{M}$, $n \in \mathbb{N}$,
and $\epsilon >0$. By Corollary 4.2 in \cite{QS}, there exist $n$
orthogonal equivalent projections $p_{1}, p_{2}, \dots, p_{n}$ in
$\mathcal{M}$ with sum $I$ such that
\begin{eqnarray}
\Vert p_{i}(s)a_{j}(s)-a_{j}(s)p_{i}(s) \Vert_{2,s} \le \epsilon/2,
\qquad \forall \quad 1 \le i \le n, 1 \le j \le k, \label{3.2.1}
\end{eqnarray}
for almost every $s \in X$, where $\| \cdot \|_{2,s}$ is the trace
norm induced by $\tau_s$ on   $\mathcal M_s$ for each $s\in X$.

For any $1 \le i \le n, 1 \le j \le k$, Lemma \ref{lem 3.1} gives
\begin{eqnarray}
\tau((p_{i}a_{j}-a_{j}p_{i})^*(p_{i}a_{j}-a_{j}p_{i})) (s)= \tau_{s}((p_{i}(s)a_{j}(s)-a_{j}(s)p_{i}(s))^*(p_{i}(s)a_{j}(s)-a_{j}(s)p_{i}(s)))I_{s} \label{3.2.2}
\end{eqnarray}
for almost every $s \in X$.

For any $1 \le i \le n, 1 \le j \le k$, from (\ref{3.2.1}),
(\ref{3.2.2}) and Proposition 14.1.9 in \cite{KR1},  it follows
that $$\Vert \tau((p_{i}a_{j}-a_{j}p_{i})^*(p_{i}a_{j}-a_{j}p_{i}))
\Vert \le \epsilon/2$$ and, thus,
$$\tau((p_{i}a_{j}-a_{j}p_{i})^*(p_{i}a_{j}-a_{j}p_{i})) < \epsilon I.$$This finishes the proof.\end{proof}





\begin{lemma}\label{mylemma 3.1}
Let $\mathcal M$ be a type II$_1$ von Neumann algebra with a center
$\mathcal Z_{\mathcal M}$. Let $\mathcal M_1$ be a von Neumann
subalgebra of $\mathcal M$ and $\mathcal Z_{\mathcal M_1}$ be the
center of $\mathcal M_1$. Suppose $\tau_{\mathcal M}$ and
$\tau_{\mathcal M_1} $ are the center-valued traces of $\mathcal M$,
and $\mathcal M_1$ respectively. For any $x\in \mathcal M_1$, we
have $\|\tau_{\mathcal M}(x)\|\le  \|\tau_{\mathcal M_1}(x)\|.$
\end{lemma}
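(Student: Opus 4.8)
The plan is to invoke the Dixmier Approximation Theorem (Lemma~\ref{Dixmier}) \emph{inside the smaller algebra} $\mathcal M_1$, and then push the resulting approximants through $\tau_{\mathcal M}$. Two standard properties of a center-valued trace are used: $\tau_{\mathcal M}$ is unitarily invariant, i.e. $\tau_{\mathcal M}(u^{*}yu)=\tau_{\mathcal M}(y)$ for every unitary $u\in\mathcal M$; and $\tau_{\mathcal M}$ is a unital positive projection of $\mathcal M$ onto $\mathcal Z_{\mathcal M}$, hence norm contractive, $\|\tau_{\mathcal M}(y)\|\le\|y\|$.

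First I would fix $x\in\mathcal M_1$. Since $\mathcal M_1$ is a finite von Neumann algebra (its center-valued trace $\tau_{\mathcal M_1}$ is assumed to exist), Lemma~\ref{Dixmier} applied to $\mathcal M_1$ yields, for each $t\in\mathbb N$, a positive integer $k_{t}$, unitaries $u_{1}^{(t)},\dots,u_{k_{t}}^{(t)}\in\mathcal M_1$ and scalars $\lambda_{1}^{(t)},\dots,\lambda_{k_{t}}^{(t)}\in[0,1]$ with $\sum_{j=1}^{k_{t}}\lambda_{j}^{(t)}=1$ such that, setting $y_{t}=\sum_{j=1}^{k_{t}}\lambda_{j}^{(t)}\,(u_{j}^{(t)})^{*}x\,u_{j}^{(t)}$, one has $\|y_{t}-\tau_{\mathcal M_1}(x)\|\to 0$ as $t\to\infty$.

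Next, each $u_{j}^{(t)}$ lies in $\mathcal M_1\subseteq\mathcal M$ and is therefore a unitary of $\mathcal M$, so by linearity and unitary invariance of $\tau_{\mathcal M}$ we get $\tau_{\mathcal M}(y_{t})=\sum_{j=1}^{k_{t}}\lambda_{j}^{(t)}\tau_{\mathcal M}(x)=\tau_{\mathcal M}(x)$ for every $t$. Passing to the limit using the norm continuity of $\tau_{\mathcal M}$ gives $\tau_{\mathcal M}(x)=\lim_{t}\tau_{\mathcal M}(y_{t})=\tau_{\mathcal M}\!\big(\tau_{\mathcal M_1}(x)\big)$. Finally, since $\tau_{\mathcal M}$ is contractive, $\|\tau_{\mathcal M}(x)\|=\|\tau_{\mathcal M}(\tau_{\mathcal M_1}(x))\|\le\|\tau_{\mathcal M_1}(x)\|$, which is the asserted inequality.

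There is essentially no serious obstacle; the only points that deserve a line of justification are that the unitaries produced by applying Dixmier's theorem in $\mathcal M_1$ are genuinely unitaries of the ambient algebra $\mathcal M$ (so that $\tau_{\mathcal M}$ is invariant under conjugation by them), and that $\tau_{\mathcal M}$ is norm continuous so that the limit may be taken termwise. Both are immediate from the basic theory of the center-valued trace, so the write-up should be short.
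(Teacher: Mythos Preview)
Your proposal is correct and follows essentially the same approach as the paper: apply the Dixmier Approximation Theorem inside $\mathcal M_1$, use that $\tau_{\mathcal M}$ is invariant under conjugation by unitaries of $\mathcal M_1\subseteq\mathcal M$, and conclude via the contractivity of $\tau_{\mathcal M}$. The only cosmetic difference is that you pass to the limit to record the identity $\tau_{\mathcal M}(x)=\tau_{\mathcal M}(\tau_{\mathcal M_1}(x))$ before applying contractivity, whereas the paper keeps an $\epsilon$ throughout and applies a triangle-inequality estimate; the substance is the same.
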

\begin{proof}
Let $x$ be an element in $\mathcal M_1$. For any $\epsilon>0$, by
the Dixmier  Approximation Theorem, there exist  a positive integer
$k $, a family of unitaries $\{ v_{j}: 1 \le j \le k  \}$ in $\mathcal{M}_{1}$ and
scalars $\{ \lambda_{j} :  1 \le j \le k \} \subseteq [0, 1]$ such
that (i) $\sum\limits_{1 \le j \le k } \lambda_{j}  =1$ and (ii)
$\Vert \sum\limits_{1 \le j \le k } \lambda_{j}   v_{j} ^* xv_{j}  -
\tau_{\mathcal M_1} (x) \Vert\le \epsilon$.

Hence, $$
\begin{aligned}
\|\tau_{\mathcal M}(x)\| &= \|\tau_{\mathcal M} \left
(\sum\limits_{1 \le j \le k } \lambda_{j}   v_{j} ^* xv_{j} \right )
\| \\
&\le \|\tau_{\mathcal M} \left (\sum\limits_{1 \le j \le k }
\lambda_{j}   v_{j} ^* xv_{j} \right ) -\tau_{\mathcal
M}(\tau_{\mathcal M_1} (x)) \|  + \|\tau_{\mathcal
M}(\tau_{\mathcal M_1} (x)) \|\\
&\le \epsilon +  \|\tau_{\mathcal M_1}(x)\|
\end{aligned}
$$ Since $\epsilon $ is arbitrary, we have $\|\tau_{\mathcal M}(x)\|\le  \|\tau_{\mathcal M_1}(x)\|.$

\end{proof}

\begin{proposition} \label {mylemma 3.2}
Suppose $\mathcal{M}$ is a countably decomposable type II$_{1}$ von
Neumann algebra. Let $\tau$ be the center-valued trace on
$\mathcal{M}$ such that $\tau(a)=a$ for any $a \in
\mathcal{Z}_{\mathcal{M}}$, where $\mathcal{Z}_{\mathcal{M}}$ is the
center of $\mathcal{M}$.
Suppose that $\mathcal{M}$ has Property $\Gamma$. Then, for $a_{1},
a_{2}, \dots, a_{k} \in \mathcal{M}$, any $n \in \mathbb{N}$, any
$\epsilon >0$, there exist $n$ orthogonal equivalent projections
$p_{1}, p_{2}, \dots, p_{n}$ in $\mathcal{M}$ with sum $I$ such that
$$\tau((p_{i}a_{j}-a_{j}p_{i})^*(p_{i}a_{j}-a_{j}p_{i})) < \epsilon I, \qquad \forall \quad 1 \le i \le n, 1 \le j \le k.$$
\end{proposition}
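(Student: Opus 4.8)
The plan is to reduce the countably decomposable case to the separably acting case already treated in Proposition~\ref{3.2}. Fix $a_{1},\dots,a_{k}\in\mathcal M$, $n\in\mathbb N$ and $\epsilon>0$. Since $\mathcal M$ is countably decomposable, so is its center, which hence carries a faithful normal state $\hat\rho$; then $\rho:=\hat\rho\circ\tau$ is a faithful normal tracial state on $\mathcal M$, and we write $\|x\|_{2,\rho}=\rho(x^{*}x)^{1/2}$. A positive $z\in\mathcal Z_{\mathcal M}$ with $\|z\|<\epsilon$ satisfies $z\le\|z\|I<\epsilon I$, so it is enough to find $n$ mutually orthogonal equivalent projections $p_{1},\dots,p_{n}$ in $\mathcal M$ with sum $I$ such that $\bigl\|\tau\bigl((p_{i}a_{j}-a_{j}p_{i})^{*}(p_{i}a_{j}-a_{j}p_{i})\bigr)\bigr\|<\epsilon$ for all $i,j$.

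Such projections will be extracted from a separably acting subalgebra, built by a ``catch your tail'' induction. Let $\mathcal A_{0}$ be the von Neumann algebra generated by $a_{1},\dots,a_{k}$ and $D_{0}$ a countable $\|\cdot\|_{2,\rho}$-dense subset of its unit ball. Given $\mathcal A_{m}$ and such a set $D_{m}$: for every finite tuple $(c_{1},\dots,c_{l})$ from $D_{m}$, every $n'\in\mathbb N$ and every $r\in\mathbb N$, use Property~$\Gamma$ of $\mathcal M$ applied to $(c_{1},\dots,c_{l})$ and $n'$, together with $\rho\in\mathcal M_{\sharp}$, to pick one family of $n'$ mutually orthogonal equivalent projections in $\mathcal M$ with sum $I$ whose every member $q$ satisfies $\rho\bigl((qc_{j}-c_{j}q)^{*}(qc_{j}-c_{j}q)\bigr)<1/r$ for all $j$; let $\mathcal A_{m+1}$ be generated by $\mathcal A_{m}$ and all these (countably many) projections, and $D_{m+1}\supseteq D_{m}$ a countable $\|\cdot\|_{2,\rho}$-dense subset of its unit ball. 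Put $\mathcal M_{1}=(\bigcup_{m}\mathcal A_{m})''$ and $D=\bigcup_{m}D_{m}$. Then $\mathcal M_{1}$ is generated by countably many operators and carries the faithful normal tracial state $\rho|_{\mathcal M_{1}}$, so it has separable predual; and by the Kaplansky density theorem $D$ is $\|\cdot\|_{2,\rho}$-dense in the unit ball of $\mathcal M_{1}$.

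Next I would check that $\mathcal M_{1}$ has Property~$\Gamma$. Given $b_{1},\dots,b_{l}$ in the unit ball of $\mathcal M_{1}$ and $n'\in\mathbb N$, approximate each $b_{j}$ in $\|\cdot\|_{2,\rho}$ by some $c_{j}\in D_{m}$ (one $m$ works for all $j$); the family inserted at stage $m+1$ for $(c_{1},\dots,c_{l})$, $n'$ and a large $r$, combined with the inequalities $\|qc\|_{2,\rho},\|cq\|_{2,\rho}\le\|c\|_{2,\rho}$ valid for any projection $q$ by traciality of $\rho$, gives $n'$ mutually orthogonal equivalent projections in $\mathcal M_{1}$ with sum $I$ whose commutators with $b_{1},\dots,b_{l}$ are as small as we wish in $\|\cdot\|_{2,\rho}$. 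Letting those bounds tend to $0$ produces a sequence of such families; since the commutators are uniformly norm-bounded and $\rho|_{\mathcal M_{1}}$ is faithful and normal, a routine Cauchy--Schwarz estimate shows their squares tend to $0$ in $\sigma(\mathcal M_{1},(\mathcal M_{1})_{\sharp})$, which is exactly the net demanded by the definition of Property~$\Gamma$. In particular, for each $n'$ the identity of $\mathcal M_{1}$ is a sum of $n'$ mutually orthogonal equivalent nonzero projections; being in addition finite and nonzero, $\mathcal M_{1}$ can have no type~I$_{l}$ ($l<\infty$) direct summand (in type~I$_{l}$ the identity is not a sum of $l+1$ mutually orthogonal equivalent projections), hence is of type~II$_{1}$.

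Finally, Proposition~\ref{3.2} applied to $\mathcal M_{1}$ yields $n$ mutually orthogonal equivalent projections $p_{1},\dots,p_{n}$ in $\mathcal M_{1}$ with sum $I$ and $\tau_{\mathcal M_{1}}\bigl((p_{i}a_{j}-a_{j}p_{i})^{*}(p_{i}a_{j}-a_{j}p_{i})\bigr)<\epsilon I$, hence with $\bigl\|\tau_{\mathcal M_{1}}\bigl((p_{i}a_{j}-a_{j}p_{i})^{*}(p_{i}a_{j}-a_{j}p_{i})\bigr)\bigr\|<\epsilon$. These $p_{i}$ are also $n$ mutually orthogonal equivalent projections in $\mathcal M$ with sum $I$, and Lemma~\ref{mylemma 3.1} gives, writing $x_{ij}=p_{i}a_{j}-a_{j}p_{i}$,
$$\bigl\|\tau(x_{ij}^{*}x_{ij})\bigr\|\le\bigl\|\tau_{\mathcal M_{1}}(x_{ij}^{*}x_{ij})\bigr\|<\epsilon ,$$
so $\tau(x_{ij}^{*}x_{ij})\le\|\tau(x_{ij}^{*}x_{ij})\|I<\epsilon I$ for all $i,j$, which is what we wanted. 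The one genuinely substantial point is the construction of a \emph{separably acting} subalgebra that still has Property~$\Gamma$; Lemma~\ref{mylemma 3.1} is the device that allows the two center-valued traces to be compared even though $\mathcal Z_{\mathcal M_{1}}$ and $\mathcal Z_{\mathcal M}$ bear no relation to each other.
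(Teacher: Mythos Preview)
Your proof is correct and follows essentially the same strategy as the paper: reduce to a type II$_{1}$ von Neumann subalgebra $\mathcal M_{1}$ with separable predual and Property~$\Gamma$ containing $a_{1},\dots,a_{k}$, apply Proposition~\ref{3.2} there, and use Lemma~\ref{mylemma 3.1} to pass from $\tau_{\mathcal M_{1}}$ to $\tau$. The only difference is that the paper obtains $\mathcal M_{1}$ by citing Lemma~3.6 of \cite{QS2}, whereas you reconstruct it explicitly via the ``catch your tail'' induction; your construction is a correct instance of precisely that lemma, so the two proofs coincide once that citation is unpacked.
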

\begin{proof}
Let $a_{1}, a_{2}, \dots, a_{k}$ be in $\mathcal{M}$. By Lemma 3.6
in \cite{QS2},  there is a type II$_{1}$ von Neumann algebra
$\mathcal{M}_{1}$ with separable predual and Property $\Gamma$ such
that $\{a_1,\ldots, a_k\} \subseteq \mathcal{M}_{1} \subseteq
\mathcal{M}$. From Proposition \ref{3.2}, it follows that  there exist $n$ orthogonal equivalent projections
$p_{1}, p_{2}, \dots, p_{n}$ in $\mathcal{M}_1$ with sum $I$ such that
$$\tau_{\mathcal M_1}((p_{i}a_{j}-a_{j}p_{i})^*(p_{i}a_{j}-a_{j}p_{i})) < \epsilon I, \qquad \forall \quad 1 \le i \le n, 1 \le j \le k,$$ where
$\tau_{\mathcal M_1}$ is the center-valued trace on $\mathcal M_1$. By
Lemma \ref{mylemma 3.1}, we obtain that
$$\tau((p_{i}a_{j}-a_{j}p_{i})^*(p_{i}a_{j}-a_{j}p_{i})) < \epsilon I, \qquad \forall \quad 1 \le i \le n, 1 \le j \le k.$$
\end{proof}

\begin{remark} \label{rem 3.3}
Suppose $\mathcal{M}$ is a type II$_{1}$ von Neumann algbera with center $\mathcal{Z}_{\mathcal{M}}$. Let $\tau$ be the center-valued trace on $\mathcal{M}$ such that $\tau(a) = a$ for any $a \in \mathcal{Z}_{\mathcal{M}}$. Suppose $\{ q_{\alpha}: \alpha \in \Omega \}$ is a family of nonzero orthogonal central projections in $\mathcal{M}$ with sum $I$. Therefore $q_{\alpha} \mathcal{M}$ is a type II$_{1}$ von Neumann algebra with center $q_{\alpha} \mathcal{Z}_{\mathcal{M}}$. Let $\tau_{\alpha}$ be the center-valued trace on $q_{\alpha}\mathcal{M}$ such that $\tau_{\alpha}(a)=a$ for any $a \in q_{\alpha} \mathcal{Z}_{\mathcal{M}}$. We have
$$\tau(a) = \sum\limits_{\alpha \in \Omega} \tau_{\alpha} (q_{\alpha} a), \qquad \forall \quad a \in \mathcal{M}. $$
\end{remark}

\begin{theorem} \label{3.3}
Suppose $\mathcal{M}$ is a type II$_{1}$ von Neumann algebra and
 $\mathcal{Z}_{\mathcal{M}}$ is the center of $\mathcal{M}$.
Let $\tau$ be the center-valued  trace on $\mathcal{M}$ such that
$\tau(a)=a$ for any $a \in \mathcal{Z}_{\mathcal{M}}$. Then the
following statements are equivalent:
\begin{enumerate}       \item[(i)] $\mathcal{M}$ has Property $\Gamma$.
\item[(ii)] There exists a family of nonzero orthogonal central projections $\{ q_{\alpha}: \alpha \in \Omega \}$ in $\mathcal M$ with sum $I$ such that $q_{\alpha} \mathcal{M}$ is a countably decomposable type II$_{1}$ von Neumann algbera with Property $\Gamma$ for each $\alpha \in
\Omega$.
\item[(iii)] For any $n \in \mathbb{N}$, any $\epsilon >0$ and $a_{1}, a_{2}, \dots, a_{k} \in \mathcal{M}$, there exist $n$ orthogonal equivalent projections $p_{1}, p_{2}, \dots, p_{n}$ in $\mathcal{M}$ with sum $I$ such that
$$\tau((p_{i}a_{j}-a_{j}p_{i})^*(p_{i}a_{j}-a_{j}p_{i})) < \epsilon I, \qquad \forall \quad 1 \le i \le n, 1 \le j \le k.$$
\item[(iv)] There exists a positive integer $n_{0} \ge 2$ satisfying {\em for any $\epsilon >0$ and $a_{1}, a_{2}, \dots, a_{k} \in \mathcal{M}$, there exist $n_{0}$ orthogonal equivalent projections $p_{1}, p_{2}, \dots, p_{n_{0}}$ in $\mathcal{M}$ with sum $I$ satisfying
$$\tau((p_{i}a_{j}-a_{j}p_{i})^*(p_{i}a_{j}-a_{j}p_{i})) < \epsilon I, \qquad \forall \quad 1 \le i \le n_{0}, 1 \le j \le
k.$$}
\item[(v)] For any $\epsilon >0$ and $a_{1}, a_{2}, \dots, a_{k} \in \mathcal{M}$, there exists a unitary $u$ in $\mathcal{M}$ such that
\begin{enumerate}
\item[(a)] $\tau(u)=0$;
\item[(b)] $\tau((ua_{j}-a_{j}u)^*(ua_{j}-a_{j}u)) < \epsilon I, \qquad \forall \quad 1 \le j \le k.$
\end{enumerate}
\item[(vi)] For  any $n \in \mathbb{N}$, any normal tracial state $\rho$ on $\mathcal{M}$, any $\epsilon > 0$ and $a_{1}, a_{2}, \dots, a_{k} \in \mathcal{M}$, there exist $n$ orthogonal equivalent projections $p_{1}, p_{2}, \dots, p_{n}$ in $\mathcal{M}$ with sum $I$ such that
$$\Vert p_{i}a_{j}-a_{j}p_{i} \Vert_{2, \rho} < \epsilon, \qquad \forall \quad 1\le i \le n, 1 \le j \le k,$$
where $\Vert \cdot \Vert_{2, \rho}$ is the $2$-norm on $\mathcal{M}$ induced by $\rho$.
\item[(vii)] There exists a positive integer $n_{0} \ge 2$ satisfying {\em for any  normal tracial state $\rho$ on $\mathcal{M}$, any $\epsilon > 0$ and $a_{1}, a_{2}, \dots, a_{k} \in \mathcal{M}$, there exist $n_{0}$ orthogonal equivalent projections $p_{1}, p_{2}, \dots, p_{n_{0}}$ in $\mathcal{M}$ with sum $I$ satisfying
$$\Vert p_{i}a_{j}-a_{j}p_{i} \Vert_{2, \rho} < \epsilon, \qquad \forall \quad 1\le i \le n_{0}, 1 \le j \le
k,$$}
where $\Vert \cdot \Vert_{2, \rho}$ is the $2$-norm on $\mathcal{M}$ induced by $\rho$.
\item[(viii)] For any normal tracial state $\rho$ on $\mathcal{M}$, any $\epsilon >0$ and $a_{1}, a_{2}, \dots, a_{k} \in \mathcal{M}$, there exists a unitary $u$ in $\mathcal{M}$ such that
\begin{enumerate}
\item[(a)]  $\tau(u)=0$;
\item[(b)]  $\Vert ua_{j}-a_{j}u \Vert_{2, \rho} < \epsilon,$  for all $ 1 \le j \le k,$
where $\Vert \cdot \Vert_{2, \rho}$ is the $2$-norm on $\mathcal{M}$ induced by $\rho$.
\end{enumerate}
\end{enumerate}
\end{theorem}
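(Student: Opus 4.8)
The plan is to establish the cycle
$(\mathrm{i})\Rightarrow(\mathrm{ii})\Rightarrow(\mathrm{iii})\Rightarrow(\mathrm{iv})\Rightarrow(\mathrm{vii})\Rightarrow(\mathrm{viii})\Rightarrow(\mathrm{i})$,
together with the auxiliary implications $(\mathrm{ii})\Rightarrow(\mathrm{i})$, $(\mathrm{iii})\Rightarrow(\mathrm{vi})\Rightarrow(\mathrm{vii})$ and $(\mathrm{iv})\Rightarrow(\mathrm{v})\Rightarrow(\mathrm{viii})$, which insert $(\mathrm v)$ and $(\mathrm{vi})$ into the cycle. Almost every step is a short consequence of the material already assembled above; the one genuinely substantial point is closing the loop, i.e.\ recovering Property $\Gamma$ from the weakest-looking of the eight statements, and I will single that out as the main obstacle.

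\emph{The easy implications.} For $(\mathrm i)\Rightarrow(\mathrm{ii})$ use Lemma \ref{3.0} to write $I=\sum_\alpha q_\alpha$ with each $q_\alpha\mathcal M$ countably decomposable; each $q_\alpha\mathcal M$ is again type II$_1$, and by Remark \ref{rem 3.1} it has Property $\Gamma$. For $(\mathrm{ii})\Rightarrow(\mathrm i)$, given $a_1,\dots,a_k$ and $n$, Property $\Gamma$ of each $q_\alpha\mathcal M$ furnishes a net $\{p^{(\alpha)}_{i\lambda_\alpha}\}_{\lambda_\alpha\in\Lambda_\alpha}$ of partitions of $q_\alpha$ into $n$ equivalent projections with $(p^{(\alpha)}_{i\lambda_\alpha}q_\alpha a_j-q_\alpha a_jp^{(\alpha)}_{i\lambda_\alpha})^*(\,\cdot\,)\to 0$ weak-$*$ in $q_\alpha\mathcal M$; indexing over the product directed set $\prod_\alpha\Lambda_\alpha$ and setting $p_{i\lambda}=\sum_\alpha p^{(\alpha)}_{i\lambda_\alpha}$ gives partitions of $I$ into $n$ equivalent projections, and for a normal state $\phi$ one has $\phi\big((p_{i\lambda}a_j-a_jp_{i\lambda})^*(p_{i\lambda}a_j-a_jp_{i\lambda})\big)=\sum_\alpha\phi_\alpha\big((p^{(\alpha)}_{i\lambda_\alpha}q_\alpha a_j-q_\alpha a_jp^{(\alpha)}_{i\lambda_\alpha})^*(\,\cdot\,)\big)$ with $\phi_\alpha=\phi|_{q_\alpha\mathcal M}$, $\sum_\alpha\|\phi_\alpha\|=1$; the uniform bound $\|[p,q_\alpha a_j]\|\le 2\|a_j\|$ controls the tail, so the sum tends to $0$. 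For $(\mathrm{ii})\Rightarrow(\mathrm{iii})$, apply Proposition \ref{mylemma 3.2} to each countably decomposable $q_\alpha\mathcal M$ with data $q_\alpha a_j$, $n$, $\epsilon/2$, obtaining $p^{(\alpha)}_i$ with $\tau_\alpha\big((p^{(\alpha)}_iq_\alpha a_j-q_\alpha a_jp^{(\alpha)}_i)^*(\,\cdot\,)\big)<(\epsilon/2)q_\alpha$; then $p_i=\sum_\alpha p^{(\alpha)}_i$, together with Remark \ref{rem 3.3} and the orthogonality of the summands, gives $\tau\big((p_ia_j-a_jp_i)^*(\,\cdot\,)\big)\le(\epsilon/2)I<\epsilon I$. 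Finally $(\mathrm{iii})\Rightarrow(\mathrm{vi})$, $(\mathrm{iv})\Rightarrow(\mathrm{vii})$ and $(\mathrm v)\Rightarrow(\mathrm{viii})$ are immediate from Lemma \ref{lem 3.2} after a rescaling $\epsilon\mapsto\epsilon^2/2$, while $(\mathrm{iii})\Rightarrow(\mathrm{iv})$ and $(\mathrm{vi})\Rightarrow(\mathrm{vii})$ are trivial (take $n_0=2$).

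\emph{From partitions to unitaries.} For $(\mathrm{iv})\Rightarrow(\mathrm v)$ and $(\mathrm{vii})\Rightarrow(\mathrm{viii})$, given a partition $p_1,\dots,p_{n_0}$ of $I$ into $n_0\ge 2$ equivalent projections, put $\omega=e^{2\pi i/n_0}$ and $u=\sum_{i=1}^{n_0}\omega^i p_i$. Then $u^*u=\sum_i p_i=I$; equivalent projections summing to $I$ have equal center-valued trace, so $\tau(p_i)=\tfrac1{n_0}I$ and $\tau(u)=\tfrac1{n_0}\big(\sum_{i=1}^{n_0}\omega^i\big)I=0$. Moreover $ua_j-a_ju=\sum_i\omega^i(p_ia_j-a_jp_i)$, and operator Cauchy--Schwarz gives $(ua_j-a_ju)^*(ua_j-a_ju)\le n_0\sum_i(p_ia_j-a_jp_i)^*(p_ia_j-a_jp_i)$; applying $\tau$ (resp.\ $\rho$) and running $(\mathrm{iv})$ (resp.\ $(\mathrm{vii})$) with $\epsilon$ replaced by a suitable multiple of $\epsilon/n_0^2$ produces the desired $u$.

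\emph{Closing the loop and the main obstacle.} It remains to prove $(\mathrm{viii})\Rightarrow(\mathrm i)$, and this is where the real work lies. First, $(\mathrm{viii})$ passes to central corners: if $q$ is a central projection and $u$ is a trace-zero unitary supplied by $(\mathrm{viii})$ for $\mathcal M$, then $qu$ is a unitary of $q\mathcal M$ with $\tau_{q\mathcal M}(qu)=q\,\tau(u)=0$, and $\|qua_j-a_jqu\|_{2,\rho}\le\|ua_j-a_ju\|_{2,\bar\rho}$ for the obvious extension $\bar\rho$ of a normal trace $\rho$ of $q\mathcal M$. Hence, by Lemma \ref{3.0} and the implication $(\mathrm{ii})\Rightarrow(\mathrm i)$ already proved, it suffices to show that a \emph{countably decomposable} type II$_1$ algebra $\mathcal N$ satisfying $(\mathrm{viii})$ has Property $\Gamma$. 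Fix a faithful normal tracial state $\rho_0$ on $\mathcal N$ and use $(\mathrm{viii})$ with $\rho=\rho_0$. A Löwenheim--Skolem/build-up argument then produces a type II$_1$ von Neumann subalgebra $\mathcal N_0$ of $\mathcal N$ with separable predual, containing any prescribed separable subset, such that $\mathcal N_0$ contains, approximately $\rho_0$-centrally, trace-zero unitaries; consequently the relative commutant of $\mathcal N_0$ in its tracial ultrapower is nontrivial, hence (being the relative commutant of a type II$_1$ algebra) diffuse, so it contains partitions of $1$ into $n$ equivalent projections for every $n$, which lift to approximately $\rho_0$-central $n$-partitions in $\mathcal N_0\subseteq\mathcal N$. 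Since $\rho_0$ is faithful, on bounded sets the $\|\cdot\|_{2,\rho_0}$-topology is the $\sigma$-strong topology, and this is exactly Property $\Gamma$ in the sense of Definition 3.1 in \cite{QS} (alternatively, invoke the corresponding separable-predual characterizations of \cite{QS} and \cite{QS2} directly). The main obstacle is precisely this bootstrap: passing from a single approximately central trace-zero unitary (or a single $n_0\ge2$) to the full family of approximately central $n$-partitions required by the definition of $\Gamma$, while correctly juggling the center-valued estimate, the estimate in an arbitrary normal trace, and the $\sigma(\mathcal M,\mathcal M_\sharp)$-convergence of the definition; the cleanest route runs through the direct integral of Lemma \ref{2.1} and Lemma \ref{lem 3.1} down to type II$_1$ factors, where nontriviality-implies-diffuseness of the ultrapower relative commutant is classical. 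Everything else is the bookkeeping of the reductions.
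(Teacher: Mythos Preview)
Your plan is essentially correct and broadly parallel to the paper's, but there are two organizational differences worth noting.

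First, the paper closes the cycle differently: it proves $(\mathrm{iii})\Rightarrow(\mathrm{i})$ directly, by taking the direct sum of all GNS representations associated to normal tracial states on $\mathcal M$, observing (via Lemma~\ref{3.0.5}(a)) that this gives a faithful normal $*$-isomorphism, and then checking that the center-valued estimate of $(\mathrm{iii})$ forces strong-operator convergence in this picture, hence $\sigma(\mathcal M,\mathcal M_\sharp)$-convergence of the commutator squares. You instead prove $(\mathrm{ii})\Rightarrow(\mathrm{i})$ by a product-net gluing argument and close the loop through $(\mathrm{viii})\Rightarrow(\mathrm{i})$. Both routes work; the paper's GNS argument is a bit cleaner because it avoids the product directed set bookkeeping.

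Second, for the substantive step (your $(\mathrm{viii})\Rightarrow(\mathrm{i})$ in the countably decomposable case; the paper's $(\mathrm v)\Rightarrow(\mathrm{ii})$ and $(\mathrm{viii})\Rightarrow(\mathrm{ii})$), the paper simply notes that $q_\alpha u$ is a trace-zero unitary in $q_\alpha\mathcal M$ that is $\rho_\alpha$-approximately central, and then invokes Proposition~3.5 of \cite{QS2} directly. Your ultrapower/L\"owenheim--Skolem sketch is correct in outline and you rightly flag that ``nontrivial $\Rightarrow$ diffuse'' for $\mathcal N_0'\cap\mathcal N_0^\omega$ is only classical for factors and needs the direct-integral reduction; but carrying that out rigorously essentially reproduces the content of \cite{QS,QS2}, so your parenthetical ``alternatively, invoke \cite{QS} and \cite{QS2} directly'' is in fact exactly what the paper does, and is the more economical choice. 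All the remaining implications---the roots-of-unity unitary, the passage from $\tau$ to $\rho$ via Lemma~\ref{lem 3.2}/Dixmier, the gluing over central pieces via Remark~\ref{rem 3.3}---match the paper's arguments.
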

\begin{proof} We will prove the result by showing that
(i)$\Rightarrow$ (ii) $\Rightarrow$ (iii) $\Rightarrow$ (iv) $\Rightarrow$
(v)$\Rightarrow$ (ii), (iii)$\Rightarrow$ (i) and (iii) $\Rightarrow$
(vi) $\Rightarrow$ (vii) $\Rightarrow$ (viii) $\Rightarrow$ (ii).

(i) $\Rightarrow$ (ii): It follows from Lemma \ref{3.0} and Remark
\ref{rem 3.1}.

(ii) $\Rightarrow$ (iii): Assume that there exists a family of
nonzero orthogonal central projections $\{ q_{\alpha}: \alpha \in
\Omega \}$ with sum $I$ such that $q_{\alpha} \mathcal{M}$ is a
countably decomposable type II$_{1}$ von Neumann algbera with
Property $\Gamma$ for each $\alpha \in \Omega$. Fix $n \in
\mathbb{N}$, any $\epsilon >0$ and $a_{1}, a_{2}, \dots, a_{k} \in
\mathcal{M}$. Then
$$
a_j= \sum_\alpha q_\alpha a_j, \qquad \forall \quad 1\le j\le n.
$$
For each $\alpha\in \Omega$, by Proposition \ref{mylemma 3.2}, there
exist $n$ orthogonal equivalent projections $p_{1}^{(\alpha)},
p_{2}^{(\alpha)}, \dots, p_{n}^{(\alpha)}$ in $q_\alpha\mathcal{M}$
with sum $q_\alpha$ such that
\begin{equation}\tau_{\alpha}((p_{i}^{(\alpha)}(q_\alpha a_{j})-(q_\alpha a_{j})p_{i}^{(\alpha)})^*
(p_{i}^{(\alpha)}(q_\alpha a_{j})-(q_\alpha a_{j})p_{i}^{(\alpha)}))
< \epsilon \cdot q_\alpha, \label{myequ 3.1}\end{equation}  for all
$ 1 \le i \le n, 1 \le j \le k$, where $\tau_\alpha$ is the
center-valued trace on $q_\alpha\mathcal M$. Let
$$p_i=\sum_{\alpha} p_{i}^{(\alpha)}, \qquad \text { for all } 1\le
i\le n.$$
Then it is not hard to see that $p_1,\ldots, p_n$ are orthogonal equivalent projections in $\mathcal M$ with sum
$I$. By Remark \ref{rem 3.3} and inequality (\ref{myequ 3.1}), we
know
$$\tau((p_{i}a_{j}-a_{j}p_{i})^*(p_{i}a_{j}-a_{j}p_{i})) < \epsilon I, \qquad \forall \quad 1 \le i \le n, 1 \le j \le k.$$

(iii) $\Rightarrow$ (iv): It is obvious.

(iv) $\Rightarrow$ (v): Assume that there exists a positive integer
$n_{0} \ge 2$ satisfying {\em for any $\epsilon >0$ and $a_{1},
a_{2}, \dots, a_{k} \in \mathcal{M}$, there exist $n_{0}$ orthogonal
equivalent projections $p_{1}, p_{2}, \dots, p_{n_{0}}$ in
$\mathcal{M}$ with sum $I$ satisfying
$$\tau((p_{i}a_{j}-a_{j}p_{i})^*(p_{i}a_{j}-a_{j}p_{i})) < \frac {\epsilon} {n_{0}^{2}} I, \qquad \forall \quad 1 \le i \le n_{0}, 1 \le j \le
k.$$} Let $\lambda =e^{2\pi i/n_0}$ be the $n_0$-th root of unit.
Let
$$
u=p_1+\lambda p_2+\cdots \lambda^{n_0-1}p_{n_{0}}.
$$ Since $p_1, \ldots, p_{n_0}$ are orthogonal equivalent
projections in $\mathcal M$, we know $\tau(u)=0$. A quick
computation shows that
$$\tau((ua_{j}-a_{j}u)^*(ua_{j}-a_{j}u)) <   \epsilon   I.$$

(v)$\Rightarrow$ (ii): Assume that (v) holds. From Lemma \ref{3.0},
there is a family of orthogonal central projections $\{ q_{\alpha}:
\alpha \in \Omega\}$ in $\mathcal{M}$ with sum $I$ such that
$q_{\alpha}\mathcal{M}$ is countably decomposable for each $\alpha
\in \Omega$.

Next we will show that $q_\alpha \mathcal M$ has Property $\Gamma$ for each $\alpha$ in $\Omega$. Let $x_1,\ldots, x_k$ be elements in $q_\alpha\mathcal M$. By the assumption (v), for any $\epsilon >0$, there exists a unitary $u$ in $\mathcal{M}$ such that (a)  $\tau(u)=0$ and (b)
 $\tau((ua_{j}-a_{j}u)^*(ua_{j}-a_{j}u)) < \epsilon I,$ for all $ 1 \le j \le k$. Since $q_\alpha \mathcal M$ is a countably decomposable type II$_1$ von Neumann subalgebra, there exists a faithful normal tracial state $\rho$ on $q_\alpha \mathcal M$. We can naturally extend $\rho$ on $q_\alpha \mathcal M$ to a normal tracial state $\tilde \rho$ on $\mathcal M$ by defining $\tilde\rho(x)=\rho(q_\alpha x)$ for all $x$ in $\mathcal M$.
 It is not hard to see that $q_\alpha u$ is a unitary in $q_\alpha \mathcal M$ and $\tau_{\alpha}(q_\alpha u)=\tau(q_\alpha u)= q_\alpha\tau(u)=0$, where $\tau_\alpha$ is a center-valued trace on $q_\alpha\mathcal M$. Moreover, by the Dixmier Approximation Theorem, we have
 $$
 \tilde \rho (x) = \tilde \rho(\tau(x)), \qquad \forall \ x\in \mathcal M.
 $$
Hence $$\begin{aligned}
 \rho(((q_\alpha u)a_{j}-a_{j}(q_\alpha u))^*((q_\alpha u)a_{j}-a_{j}(q_\alpha u))) &=
  \tilde\rho\left ((ua_{j}-a_{j}u)^*(ua_{j}-a_{j}u)\right )\\& =\tilde\rho\left (\tau((ua_{j}-a_{j}u)^*(ua_{j}-a_{j}u))\right )\\
  &\le \epsilon,
 \end{aligned}$$  for all $1\le i\le k.$ By Proposition 3.5 in \cite{QS2}, we conclude that $q_\alpha\mathcal M$ has Property $\Gamma$.

(iii)$\Rightarrow$ (i): Assume that (iii) is true. We assume that $\mathcal M$ acts on a Hilbert space $H$. Let $x_1,\ldots, x_k$ be a family of elements in $\mathcal M$. From (iii), for any positive integer $n$, there exists a family of  projections $\{p_{ir}: 1\le i\le n, r\ge 1\}$ in $\mathcal{M}$ such that
\begin{enumerate}
\item [1.] For each $r\ge 1$, $p_{1,r}, \ldots, p_{n,r}$ are orthogonal equivalent projections in $\mathcal M$ with sum $I$.
\item [2.] Moreover,
$$\lim_{r\rightarrow \infty}\|\tau((p_{i,r}a_{j}-a_{j}p_{i,r})^*(p_{i,r}a_{j}-a_{j}p_{i,r}))\| =0, \qquad \forall \  1\le i\le n,  1 \le j \le k.$$
\end{enumerate}
Thus, for any normal tracial state $\rho$ on $\mathcal M$, we have
\begin{align}
  \lim_{r\rightarrow \infty}\| \rho((p_{i,r}a_{j} -a_{j}p_{i,r})^* (p_{i,r}a_{j}-a_{j}p_{i,r})) \|
  &= \lim_{r\rightarrow \infty}\|\rho(\tau((p_{i,r}a_{j}-a_{j}p_{i,r})^*(p_{i,r}a_{j}-a_{j}p_{i,r})))\| \notag\\
  &\le \lim_{r\rightarrow \infty}\| \tau((p_{i,r}a_{j}-a_{j}p_{i,r})^*(p_{i,r}a_{j}-a_{j}p_{i,r}))\notag\|\\
  &=0.  \label{myeq 3.2}
\end{align}
Let $\{\rho_{\lambda}\}_{\lambda\in\Lambda}$ be the collection of all normal tracial states on $\mathcal M$. For each $\lambda\in\Lambda$, let $(\pi_\lambda,   H_\lambda, \hat I_\lambda)$ be the GNS representation, obtained from $\rho_\lambda$,  of $\mathcal M$ on the Hilbert space $H_\lambda=L^2(\mathcal M,\rho_\lambda)$ with a cyclic vector $\hat I_\lambda$ in $H_\lambda$. We also let $K=\sum_{\lambda\in \Lambda}   H_\lambda$ be the direct sum of Hilbert spaces $\{H_{\lambda}\}$ and $\pi =\sum_{\lambda\in \Lambda} \pi_{\lambda}:\mathcal M\rightarrow B(K)$ be the direct sum of $\{\pi_\lambda\}$. Thus $\pi$ is a  $*$-representation of $\mathcal M$ on $K$ defined by
$$
\pi(x)((\xi_\lambda))=(\pi_{\lambda}(x)\xi_\lambda ), \qquad \forall \ (\xi_\lambda)\in \sum_{\lambda\in \Lambda}  H_\lambda= K.
$$
It is not hard to see that $\pi$ is a normal $*$-representation and $\pi(\mathcal M)$ is also a von Neumann algebra.
By Lemma \ref{3.0.5} (a),  $\pi$ is a $*$-isomorphism from $\mathcal M$ onto $\pi(\mathcal M)$.

We claim  that,  for all $ 1\le i\le n,  1 \le j \le k,$
$$
 (p_{i,r}a_{j} -a_{j}p_{i,r})^* (p_{i,r}a_{j}-a_{j}p_{i,r}) \rightarrow 0 \  \text {in ultraweak operator topology (or in  $\sigma(\mathcal M, \mathcal M_\sharp)$ topology).}
$$ Actually, the claim is equivalent  to the statement that
$$
 \pi((p_{i,r}a_{j} -a_{j}p_{i,r})^* (p_{i,r}a_{j}-a_{j}p_{i,r})) \rightarrow 0 \  \text {in ultraweak topology}.
$$ Note that $$\{(p_{i,r}a_{j} -a_{j}p_{i,r})^* (p_{i,r}a_{j}-a_{j}p_{i,r}):  1\le i\le n,  1 \le j \le k, r \in\Bbb N\}$$ is a bounded subset in $\mathcal M$. It will be enough if we are able to show that
$$
 \pi((p_{i,r}a_{j} -a_{j}p_{i,r})^* (p_{i,r}a_{j}-a_{j}p_{i,r})) \rightarrow 0 \ \ \  \text {in  weak operator topology},
$$ or
\begin{equation}
 \pi (p_{i,r}a_{j}-a_{j}p_{i,r} ) \rightarrow 0 \ \ \  \text {in  strong operator topology}. \label{myequ 3.3}
\end{equation}
By the construction of $\pi$, (\ref{myequ 3.3}) follows directly from (\ref{myeq 3.2}).

 From the claim in the preceding paragraph, by the definition of Property $\Gamma$, we know that $\mathcal M$ has property $\Gamma$.

(iii)$\Rightarrow$ (vi): From the Dixmier Approximation Theorem, for any normal tracial state $\rho$ on $\mathcal M$, we have
$$
\rho(x)=\rho(\tau(x))  \qquad \forall \ x\in \mathcal M.
$$
Now (vi) follows easily from (iii).

(vi) $\Rightarrow$ (vii): It is obvious.

(vii) $\Rightarrow$ (viii): It is similar to (iv)$\Rightarrow$ (v).

(viii) $\Rightarrow$ (ii): Assume that (viii) holds.  From Lemma \ref{3.0},
there is a family of orthogonal central projections $\{ q_{\alpha}:
\alpha \in \Omega\}$ in $\mathcal{M}$ with sum $I$ such that
$q_{\alpha}\mathcal{M}$ is countably decomposable for each $\alpha
\in \Omega$. We need to show that $q_\alpha \mathcal M$ has Property $\Gamma$ for each $\alpha$ in $\Omega$.

Since each $q_\alpha \mathcal M$ is a countably decomposable type II$_1$ von Neumann algebra. There exists a faithful normal tracial state $\rho_\alpha$ on  $q_\alpha \mathcal M$. Then the normal tracial state $\rho_\alpha$ on  $q_\alpha \mathcal M$ can be naturally extended to a normal tracial state $\tilde \rho$ on $\mathcal M$ by defining $\tilde \rho(x)=\rho_\alpha(q_\alpha x)$ for all $x\in \mathcal M$.
Let $\epsilon>0$ and $a_1,\ldots, a_k$ be elements in $q_{\alpha} \mathcal M$. Since (viii) holds, there exists a unitary $u$ in $\mathcal{M}$ such that
\begin{enumerate}
\item[(a)] $\tau(u)=0$;
\item[(b)] $\Vert ua_{j}-a_{j}u \Vert_{2, \tilde \rho} < \epsilon, $ for all $ 1 \le j \le k,$ where $\|\cdot\|_{2, \tilde \rho}$ is the trace norm induced by $\tilde \rho$ on $\mathcal M$.
\end{enumerate}
Now it is not hard to verify that  $q_\alpha u$ is a unitary in $q_\alpha \mathcal M$ satisfying $\tau_{\alpha}(q_\alpha u)=\tau(q_\alpha u)=0$, where $\tau_{\alpha}$ is the unique center-valued trace on  $q_\alpha \mathcal M$. Moreover
$$
\begin{aligned}
 \Vert (q_\alpha u)a_{j}-a_{j}(q_\alpha u)\Vert_{2,  \rho_\alpha} & =  \Vert (q_\alpha u)a_{j}-a_{j}(q_\alpha u)\Vert_{2, \tilde \rho}\\
 &= \Vert ua_{j}-a_{j}u \Vert_{2, \tilde \rho} \\
 &<\epsilon.
\end{aligned}
$$
From Proposition 3.5 in \cite{QS2}, it follows that  $q_\alpha \mathcal M$ has Property $\Gamma$ for each $\alpha$ in $\Omega$. This ends the whole proof.
\end{proof}

\section{Similarity degree of type II$_{1}$ von Neumann algebras with Property $\Gamma$}
Let us recall a definition of Property c$^*$-$\Gamma$ for unital C$^*$-algebras given in \cite{QS2}. {\em Suppose $\mathcal{A}$ is a unital C$^*$-algebra. We say $\mathcal{A}$ has Property c$^*$-$\Gamma$ if it satisfies the following condition:
\begin{enumerate}\item[]
 If $\pi$ is a unital $*$-representation of $\mathcal{A}$ on a Hilbert space $H$ such that $\pi(\mathcal{A})''$ is a type II$_{1}$ factor, then $\pi (\mathcal{A})''$ has Property $\Gamma$.
\end{enumerate} }

If $\mathcal{A}$ is a separable unital C$^*$-algebra with Property c$^*$-$\Gamma$, Theorem 5.3 in \cite{QS2} gives that the similarity degree of $\mathcal{A}$ is no more than 3. Indeed, it was shown in Theorem 5.3 in \cite{QS2} that, for any C$^*$-algebra $\mathcal{B}$, if $\phi$ is a bounded unital homomorphism from $\mathcal{A}$ to $\mathcal{B}$, then $\Vert \phi \Vert_{cb} \le \Vert \phi \Vert^3$.

\begin{lemma} \label{4.1}
Suppose $\mathcal{M}$ is a type II$_{1}$ von Neumann algebra with Property $\Gamma$. Let $\tau$ be the center-valued trace on $\mathcal{M}$ such that $\tau(a)=a$ for any $a \in \mathcal{Z}_{\mathcal{M}}$, where $\mathcal{Z}_{\mathcal{M}}$ is the center of $\mathcal{M}$. Suppose $F$ is a finite subset of $\mathcal{M}$. Then there exists a separable unital C$^*$-subalgebra $\mathcal{A}$ with Property c$^*$-$\Gamma$ satisfying $F \subseteq \mathcal{A} \subseteq \mathcal{M}$.
\end{lemma}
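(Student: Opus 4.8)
The plan is to construct $\mathcal{A}$ by a countable exhaustion argument, enlarging the algebra at each stage so that in the limit it contains, for every finite tuple of its elements, enough orthogonal equivalent projection families to witness the approximation of Theorem \ref{3.3}(iii), and also enough Dixmier averaging unitaries to control its tracial states. Concretely, I would set $\mathcal{A}_0 = C^*(F \cup \{I\})$, a separable unital C$^*$-subalgebra of $\mathcal{M}$; and, given a separable unital subalgebra $\mathcal{A}_n \subseteq \mathcal{M}$ with a fixed countable norm-dense subset $D_n$, I would let $\mathcal{A}_{n+1}$ be the C$^*$-algebra generated by $\mathcal{A}_n$ together with the following countable family of elements of $\mathcal{M}$: (a) for each $k, \ell, q \in \mathbb{N}$ and each $a_1, \dots, a_k \in D_n$, orthogonal equivalent projections $p_1, \dots, p_\ell \in \mathcal{M}$ with sum $I$ and $\tau((p_i a_j - a_j p_i)^*(p_i a_j - a_j p_i)) < (1/q) I$ for all $i, j$ (these exist, since $\mathcal{M}$ has Property $\Gamma$, by Theorem \ref{3.3}(iii)), together with partial isometries $w_1, \dots, w_\ell \in \mathcal{M}$ with $w_i^* w_i = p_1$ and $w_i w_i^* = p_i$; and (b) for each $x \in D_n$ and each rational $r > \|\tau(x^*x)\|$, finitely many unitaries $v_1, \dots, v_m \in \mathcal{M}$ and scalars $\alpha_1, \dots, \alpha_m \ge 0$ with $\sum_i \alpha_i = 1$ and $\|\sum_i \alpha_i v_i^*(x^*x)v_i - \tau(x^*x)\| < r - \|\tau(x^*x)\|$ (these exist by the Dixmier Approximation Theorem, Lemma \ref{Dixmier}). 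Each $\mathcal{A}_{n+1}$ is again separable and unital, so $\mathcal{A} := \overline{\bigcup_n \mathcal{A}_n}$ is a separable unital C$^*$-subalgebra of $\mathcal{M}$ containing $F$, and $D := \bigcup_n D_n$ is norm-dense in $\mathcal{A}$.

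The key step is to check that every tracial state $\sigma$ on $\mathcal{A}$ satisfies $\sigma(x^*x) \le \|\tau(x^*x)\|$ for every $x \in \mathcal{A}$. For $x \in D$ and a rational $r > \|\tau(x^*x)\|$, the unitaries $v_i$ and scalars $\alpha_i$ of item (b) lie in $\mathcal{A}$, so, using that $\sigma$ is tracial and $\sum_i \alpha_i = 1$, one gets $\sigma(x^*x) = \sum_i \alpha_i \sigma(v_i^*(x^*x)v_i) = \sigma\bigl(\sum_i \alpha_i v_i^*(x^*x)v_i\bigr) \le \bigl\|\sum_i \alpha_i v_i^*(x^*x)v_i\bigr\| < r$; letting $r$ decrease to $\|\tau(x^*x)\|$ gives the claim for $x \in D$, and the norm-continuity of $x \mapsto \sigma(x^*x)$ and of $x \mapsto \|\tau(x^*x)\|$ extends it to all of $\mathcal{A}$. (This is simply the argument of Lemma \ref{lem 3.2}, carried out with averaging unitaries drawn from $\mathcal{A}$ rather than $\mathcal{M}$.)

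Granting this, Property c$^*$-$\Gamma$ of $\mathcal{A}$ follows. Let $\pi$ be a unital $*$-representation of $\mathcal{A}$ with $N := \pi(\mathcal{A})''$ a type II$_1$ factor, and let $\mathrm{tr}_N$ be its trace; then $\sigma := \mathrm{tr}_N \circ \pi$ is a tracial state on $\mathcal{A}$. Given $b_1, \dots, b_k \in N$, $\ell \in \mathbb{N}$ and $\epsilon > 0$, I would use Kaplansky density to choose $a_1, \dots, a_k \in D$ with $\|b_j - \pi(a_j)\|_{2,\mathrm{tr}_N} < \epsilon/3$ (note that $\pi(D)$ is $\|\cdot\|_{2,\mathrm{tr}_N}$-dense in $N$), and choose $q \in \mathbb{N}$ with $1/q < (\epsilon/3)^2$. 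Picking $n$ with $a_1, \dots, a_k \in D_n$, the elements $p_1, \dots, p_\ell$ and $w_1, \dots, w_\ell$ of item (a) then lie in $\mathcal{A}$, and $q_i := \pi(p_i)$ are orthogonal projections in $N$, mutually equivalent via the $\pi(w_i)$, with $\sum_i q_i = \pi(I) = I_N$. Writing $y_{ij} = (p_i a_j - a_j p_i)^*(p_i a_j - a_j p_i) \in \mathcal{A}$, the key estimate gives $\|q_i \pi(a_j) - \pi(a_j) q_i\|_{2,\mathrm{tr}_N}^2 = \sigma(y_{ij}) \le \|\tau(y_{ij})\| < 1/q$, hence $\|q_i \pi(a_j) - \pi(a_j) q_i\|_{2,\mathrm{tr}_N} < \epsilon/3$, and therefore $\|q_i b_j - b_j q_i\|_{2,\mathrm{tr}_N} < \epsilon$ by the triangle inequality (using $\|q_i\| \le 1$). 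Thus $N$ satisfies the standard approximation property characterizing Property $\Gamma$ for type II$_1$ factors (equivalently, Proposition 3.5 in \cite{QS2}; see also the remark following Theorem \ref{3.3} and \cite{Mv}), so $N$ has Property $\Gamma$, and $\mathcal{A}$ has Property c$^*$-$\Gamma$.

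The step I expect to be the main obstacle is precisely the one addressed in the second paragraph, and it is the reason item (b) is built into the construction: an arbitrary tracial state $\sigma$ on the C$^*$-algebra $\mathcal{A}$ need not be normal on $\mathcal{M}$, nor agree with $\tau$ on $\mathcal{A}$, so the center-valued estimates provided by Theorem \ref{3.3} do not transfer to $N$ for free; embedding the Dixmier averaging unitaries into $\mathcal{A}$ is exactly what forces the inequality $\sigma(x^*x) \le \|\tau(x^*x)\|$ and thereby makes the transfer legitimate.
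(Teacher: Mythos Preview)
Your proof is correct and follows essentially the same strategy as the paper's: iteratively enlarge a separable subalgebra by adding (a) orthogonal equivalent projections witnessing Theorem \ref{3.3}(iii) and (b) Dixmier averaging unitaries, so that any tracial state on $\mathcal{A}$ inherits the center-valued commutator estimates, and then use Kaplansky density in any II$_1$-factor representation. The only differences are cosmetic—the paper uses $2\times 2$ matrix units and adds Dixmier unitaries only for the specific commutator elements at each stage, whereas you add $\ell$-tuples for all $\ell$ and Dixmier unitaries for all $x^*x$ with $x\in D_n$, yielding the clean inequality $\sigma(x^*x)\le \|\tau(x^*x)\|$.
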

\begin{proof}
Let $F_{1}=F=\{ x_{1}, x_{2}, \dots, x_{k} \}$ be a finite subset of $\mathcal{M}$.  Since $\mathcal{M}$ has Property $\Gamma$, by Theorem \ref{3.3},  there exists  a $2 \times 2$ system of matrix units $\{ e_{11}^{(1)}, e_{12}^{(1)}, e_{21}^{(1)}, e_{22}^{(1)}\}$ such that
\begin{enumerate}
  \item [(i$_1$)] $e_{11}^{(1)}+e_{22}^{(1)}=I$, where $I$ is the identity of $\mathcal M$.
  \item [(ii$_1$)] $\tau((e_{ii}^{(1)}x-xe_{ii}^{(1)})^*(e_{ii}^{(1)}x-xe_{ii}^{(1)}))\le  \frac 1 2    I$, for each $x\in F_{1}$.
\end{enumerate}
 From (ii$_1$), by the Dixmier  Approximation Theorem,
there exist a positive integer $n_{1}$, a family of unitaries $v_{1}^{(1)}, v_{2}^{(1)}, \dots, v_{n_{1}}^{(1)}$ in $\mathcal{M}$  such that
\begin{enumerate}
\item[(iii$_1$)] For each $1 \le i \le 2$ and each $x \in F_1$, there is an element $y$ in the convex hull of $\{ (v_{t}^{(1)})^* (e_{ii}^{(1)}x-xe_{ii}^{(1)})^*(e_{ii}^{(1)}x-xe_{ii}^{(1)}) v_{t}^{(1)} : 1 \le t \le n_{1} \}$ with $\Vert y \Vert < 1$.
\end{enumerate}
Let $ F_{2} = F_{1} \cup \{ e_{11}^{(1)}, e_{12}^{(1)}, e_{21}^{(1)}, e_{22}^{(1)}\} \cup \{ v_{1}^{(1)}, \dots, v_{n_{1}}^{(1)} \}. $

Assume that $F_{1} \subseteq F_{2} \subseteq \dots \subseteq F_{m}$ have been constructed for some $m \ge 2$.
Since $\mathcal{M}$ has Property $\Gamma$, again by Theorem \ref{3.3},  there exists  a $2 \times 2$ system of matrix units $\{ e_{11}^{(m)}, e_{12}^{(m)}, e_{21}^{(m)}, e_{22}^{(m)}\}$ such that
\begin{enumerate}
  \item [(i$_m$)] $e_{11}^{(m)}+e_{22}^{(m)}=I$, where $I$ is the identity of $\mathcal M$.
  \item [(ii$_m$)] $\tau((e_{ii}^{(m)}x-xe_{ii}^{(m)})^*(e_{ii}^{(m)}x-xe_{ii}^{(m)}))\le  \frac 1 {m+1}   I$, for each $x\in F_{m}$.
\end{enumerate}
 From (ii$_m$), by the Dixmier Approximation Theorem,
there exist a positive integer $n_{m}$, a family of unitaries $v_{1}^{(m)}, v_{2}^{(m)}, \dots, v_{n_{m}}^{(m)}$ in $\mathcal{M}$  such that
\begin{enumerate}
\item[(iii$_m$)] For each $1 \le i \le 2$ and each $x \in F_m$, there is an element $y$ in the convex hull of $\{ (v_{t}^{(m)})^* (e_{ii}^{(m)}x-xe_{ii}^{(m)})^*(e_{ii}^{(m)}x-xe_{ii}^{(m)}) v_{t}^{(m)} : 1 \le t \le n_{m} \}$ with $\Vert y \Vert < \frac 1 m$.
\end{enumerate}
Let $ F_{m+1} = F_{m} \cup \{ e_{11}^{(m)}, e_{12}^{(m)}, e_{21}^{(m)}, e_{22}^{(m)} \} \cup \{ v_{1}^{(m)}, v_{2}^{(m)}, \dots, v_{n_{m}}^{(m)} \}. $

Continuing this process, we are able to obtain a sequence $\{F_m\}$, a sequence of system of units $ \{ e_{11}^{(m)}, e_{12}^{(m)}, e_{21}^{(m)}, e_{22}^{(m)} \}$ such that
\begin{enumerate}
 \item [(0)] $\{x_1,\ldots, x_k\}=F_1\subseteq F_2\subseteq \cdots \subseteq F_m\subseteq \cdots$.
 \item [(1)] For each $m\ge 1$, $\{ e_{11}^{(m)}, e_{12}^{(m)}, e_{21}^{(m)}, e_{22}^{(m)} \} $ is a system of units such that $e_{11}^{(m)}+e_{22}^{(m)}=I$.
 \item [(2)]  $\tau((e_{ii}^{(m)}x-xe_{ii}^{(m)})^*(e_{ii}^{(m)}x-xe_{ii}^{(m)}))\le  \frac 1 {m+1}   I$, for each $x\in F_{m}$.
 \item [(3)] For each   $  i =1, 2$ and each $x \in F_m$, there is an element  $$\text{ $y \in conv\{ (v^*  (e_{ii}^{(m)}x-xe_{ii}^{(m)})^*(e_{ii}^{(m)}x-xe_{ii}^{(m)}) v: $     $ v$ is a unitary in $F_{m+1} \}$}$$ satisfying $$\Vert y \Vert < \frac 1 m.$$
\end{enumerate}

Let $\mathcal{A}$ be the unital C$^*$-algebra generated by $\cup_{m \in \mathbb{N}} F_{m}$. Then $\mathcal{A}$ is separable and it follows from the preceding construction that,
\begin{enumerate}
\item[(4)]  for $i=1,2$ and  any $x \in \mathcal{A}$, there exists a sequence of  elements $\{y_{m}\}_{m\ge 1}$ in $\mathcal A$ such that, for $m\ge 1$, each $y_m$  is in the convex hull of $\{ v^* (e_{ii}^{(m)}x-xe_{ii}^{(m)})^*(e_{ii}^{(m)}x-xe_{ii}^{(m)}) v : v \text{ is a unitary in } \mathcal{A}\}$ and $\lim\limits_{m \to \infty} \Vert y_{m} \Vert = 0$.
\end{enumerate}

Now we are going  to show this  C$^*$-subalgebra $\mathcal{A}$ of $\mathcal{M}$ has Property c$^*$-$\Gamma$.  Suppose $\pi $ is a unital $*$-representation of $\mathcal A$ on a Hilbert space $H$ such that $\pi(\mathcal{A})''$ is a type II$_{1}$ factor. Notice that for each $m \in \mathbb{N}$, $\{ e_{11}^{(m)}, e_{12}^{(m)}, e_{21}^{(m)}, e_{22}^{(m)} \}$ is a $2 \times 2$ system of matrix units in $\mathcal{A}$. It follows that $\{ \pi(e_{11}^{(m)}), \pi(e_{12}^{(m)}), \pi(e_{21}^{(m)}), \pi(e_{22}^{(m)}) \}$ is also a system of matrix units. Hence $\pi(e_{11}^{(m)}), \pi(e_{22}^{(m)})$ are orthogonal equivalent projections in $\pi(\mathcal{A})''$ with sum $I$.

It follows from Condition (4) that
\begin{enumerate}
\item[(4')] for $i=1,2$ and  any $x \in \pi(\mathcal{A})$, there exists a sequence of  elements $\{y_{m}\}_{m\ge 1}$ in $\pi(\mathcal A)$ such that, for $m\ge 1$, each $y_m$  is in the convex hull of $$\{ v^* (\pi(e_{ii}^{(m)})x-x\pi(e_{ii}^{(m)}))^*(\pi(e_{ii}^{(m)})x-x\pi(e_{ii}^{(m)})) v : \ \ v \text{ is a unitary in } \pi(\mathcal{A})\}$$ and $\lim\limits_{m \to \infty} \Vert y_{m} \Vert = 0$.
\end{enumerate}
Let $\rho$ be the unique trace on $\pi(\mathcal{A})''$. Since $\rho$ is tracial,   Condition (4') implies that, for any $x \in \pi(\mathcal{A})$,
\begin{eqnarray}
&&\lim\limits_{m \to \infty} \rho((\pi(e_{ii}^{(m)})\pi(x)-\pi(x) \pi (e_{ii}^{(m)})^*(\pi(e_{ii}^{(m)}) \pi(x)-\pi(x) \pi(e_{ii}^{(m)})) \notag \\
&&=\lim\limits_{m \to \infty} \rho(y_{m}) \notag \\
&&=0. \label{4.1.1}
\end{eqnarray}
By Kaplansky Density Theorem, it follows from (\ref{4.1.1}) that
$$\lim\limits_{m \to \infty}\rho((\pi(e_{ii}^{(m)}) a- a \pi (e_{ii}^{(m)})^*(\pi(e_{ii}^{(m)}) a - a \pi(e_{ii}^{(m)})) =0$$
for any $a \in \pi(\mathcal{A})''$.
Note that a type II$_1$ factor is always countably decomposable. By Proposition 3.5 in \cite{QS2}, $\pi(\mathcal{A})''$ has Property $\Gamma$, whence we conclude that $\mathcal{A}$ has Property c$^*$-$\Gamma$.

The proof is completed.
\end{proof}

It was shown in \cite{C2} that the similarity degree of a type II$_{1}$ factor with Property $\Gamma$ is 3. The following theorem gives a generalization.

\begin{theorem}\label{4.2}
If $\mathcal{M}$ is a type II$_{1}$ von Neumann algebra with Property $\Gamma$, then the similarity degree $d(\mathcal{M}) =3$.
\end{theorem}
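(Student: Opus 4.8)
The plan is to deduce Theorem \ref{4.2} from two ingredients already available: Lemma \ref{4.1}, which says every finite subset $F$ of $\mathcal{M}$ sits inside a separable unital C$^*$-subalgebra $\mathcal{A}$ with Property c$^*$-$\Gamma$; and Theorem 5.3 in \cite{QS2}, which says that a separable unital C$^*$-algebra with Property c$^*$-$\Gamma$ satisfies $\Vert\phi\Vert_{cb}\le\Vert\phi\Vert^{3}$ for every bounded unital homomorphism $\phi$ into any C$^*$-algebra $\mathcal{B}$. The strategy is the standard ``localize to a separable subalgebra'' argument for similarity degree: given a bounded unital homomorphism $\phi\colon\mathcal{M}\to B(H)$, we want $\Vert\phi\Vert_{cb}\le\Vert\phi\Vert^{3}$, and by Pisier's characterization this gives $d(\mathcal{M})\le 3$; the reverse inequality $d(\mathcal{M})\ge 3$ follows because $\mathcal{M}$ carries a tracial state and is infinite-dimensional, so it cannot be nuclear (by Pisier's result $d=2$ iff nuclear) and hence $d(\mathcal{M})\ge 3$ (one should cite \cite{Pi3} here, and note $\mathcal M$ is not nuclear since it is an infinite-dimensional von Neumann algebra).

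First I would reduce the computation of $\Vert\phi\Vert_{cb}$ to finitely many matrix entries: fix $n$ and $X=(x_{ij})\in M_{n}(\mathcal{M})$ with $\Vert X\Vert\le 1$, and let $F=\{x_{ij}\colon 1\le i,j\le n\}$, a finite subset of $\mathcal{M}$. By Lemma \ref{4.1} there is a separable unital C$^*$-subalgebra $\mathcal{A}$ with $F\subseteq\mathcal{A}\subseteq\mathcal{M}$ having Property c$^*$-$\Gamma$. Then $X\in M_{n}(\mathcal{A})$, the restriction $\phi|_{\mathcal{A}}\colon\mathcal{A}\to B(H)$ is a bounded unital homomorphism with $\Vert\phi|_{\mathcal{A}}\Vert\le\Vert\phi\Vert$, and Theorem 5.3 of \cite{QS2} applies to give $\Vert\phi|_{\mathcal{A}}\Vert_{cb}\le\Vert\phi|_{\mathcal{A}}\Vert^{3}\le\Vert\phi\Vert^{3}$. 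Hence $\Vert(\phi(x_{ij}))\Vert=\Vert(\phi|_{\mathcal{A}}(x_{ij}))\Vert\le\Vert\phi|_{\mathcal{A}}\Vert_{cb}\Vert X\Vert\le\Vert\phi\Vert^{3}$. Since $n$ and $X$ were arbitrary, $\Vert\phi\Vert_{cb}\le\Vert\phi\Vert^{3}$.

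Finally I would invoke the equivalence (from \cite{Pi2}) between $\Vert\phi\Vert_{cb}\le K\Vert\phi\Vert^{d}$ holding for all bounded unital homomorphisms and $d(\mathcal{M})\le d$: the displayed bound with $d=3$ (and constant $1$) shows $d(\mathcal{M})\le 3$. Combined with $d(\mathcal{M})\ge 3$ from the previous paragraph, we conclude $d(\mathcal{M})=3$.

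**The main obstacle** is essentially already dispatched: all the analytic content lives in Lemma \ref{4.1} (which in turn rests on Theorem \ref{3.3}) and in the cited Theorem 5.3 of \cite{QS2}. The only subtlety in the plan itself is making sure the passage from $\mathcal{M}$ to the separable subalgebra $\mathcal{A}$ is uniform in $n$ — i.e.\ that the constant $1$ and exponent $3$ in the estimate $\Vert\phi|_{\mathcal{A}}\Vert_{cb}\le\Vert\phi\Vert^{3}$ do not depend on which finite set $F$ (hence which $\mathcal{A}$) we chose. They do not, because the bound in Theorem 5.3 of \cite{QS2} has the universal form $\Vert\phi\Vert_{cb}\le\Vert\phi\Vert^{3}$ with no algebra-dependent constant, so the supremum over all $n$ and all $X$ is controlled by $\Vert\phi\Vert^{3}$. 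One should also remark that $\phi$ bounded on $\mathcal{M}$ automatically restricts to a bounded homomorphism on $\mathcal{A}$ with no larger norm, so no extra care is needed there.
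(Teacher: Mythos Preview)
Your proposal is correct and follows essentially the same argument as the paper: localize a given matrix $X=(x_{ij})\in M_n(\mathcal{M})$ to a separable c$^*$-$\Gamma$ subalgebra via Lemma \ref{4.1}, apply Theorem 5.3 of \cite{QS2} to the restriction, and take the supremum. The only slip is in your lower bound: ``infinite-dimensional von Neumann algebra'' (even with a tracial state) does not imply non-nuclear---$L^\infty[0,1]$ is a counterexample---so you should instead cite that type II$_1$ von Neumann algebras are non-nuclear (the paper uses Corollary 1.9 of \cite{SW}) before invoking \cite{Pi3}.
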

\begin{proof}
Since $\mathcal{M}$ is a von Neumann algebra of type II$_{1}$, by Corollary 1.9 in \cite{SW}, it is not nuclear. It follows from Theorem 1 in \cite{Pi3} that $d(\mathcal{M}) \ge 3$. In the following we show that $d(\mathcal{M})$ is no more than 3.

Suppose $\phi: \mathcal{M} \to B(H)$ is a bounde unital homomorphism, where $H$ is a Hilbert space. We will show that $\Vert \phi \Vert_{cb} \le \Vert \phi \Vert^{3} $. In fact we are going to prove that, for any $n \in \mathbb{N}$ and any $x=(x_{ij}) \in M_{n}(\mathcal{M})$,
\begin{eqnarray}
\Vert \phi^{(n)} (x) \Vert \le \Vert \phi \Vert^{3} \Vert x \Vert. \label{4.2.1}
\end{eqnarray}

Fix $n \in \mathbb{N}$ and $x=(x_{ij}) \in M_{n}(\mathcal{M})$. We assume that $\Vert x \Vert =1$. Notice that $F = \{ x_{ij}: 1 \le i, j \le n \}$ is a finite subset of $\mathcal{M}$. By Lemma \ref{4.1}, there is a separable unital C$^*$-subalgebra $\mathcal{A}$ of $\mathcal{M}$ with Property c$^*$-$\Gamma$ such that $F \subseteq \mathcal{A}$. Let $\tilde{\phi}$ be the restriction of $\phi$ on $\mathcal{A}$. Then $\tilde{\phi}: \mathcal{A} \to B(H)$ is a bounded unital homomorphism. It was shown in  the proof of Theorem 5.3 in \cite{QS2} that
\begin{eqnarray}
\Vert \tilde{\phi} \Vert_{cb} \le \Vert \tilde{\phi} \Vert^{3}. \label{4.2.2}
\end{eqnarray}
Since $F \subseteq \mathcal{A}$, it follows from (\ref{4.2.2}) that
\begin{eqnarray*}
\Vert \phi^{(n)} (x) \Vert &=& \Vert \tilde{\phi}^{(n)} (x) \Vert \le \Vert \tilde{\phi} \Vert^{3} \le \Vert \phi \Vert^{3}.
\end{eqnarray*}

Therefore $d(\mathcal{M}) =3$ and the proof is completed.
\end{proof}

Based on Theorem \ref{4.2}, a slight  modification of the proof of Theorem 5.2 in \cite{QS2} gives the next corollary.

\begin{corollary} \label{4.3}
Let $\mathcal{M}$ be a von Neumann algebra with the type decomposition
$$\mathcal{M}= \mathcal{M}_{1} \oplus \mathcal{M}_{c_{1}} \oplus \mathcal{M}_{c_{\infty}} \oplus \mathcal{M}_{\infty},$$
where $\mathcal{M}_{1}$ is a type  I  von Neumann algebra,
$\mathcal{M}_{c_{1}}$ is a type  II$_{1}$ von Neumann algebra,
$\mathcal{M}_{c_{\infty}}$ is a type II$_{\infty}$ von Neumann
algebra and $\mathcal{M}_{\infty}$ is a type  III  von Neumann
algebra. Suppose  $\mathcal{M}_{c_{1}}$ is a type II$_{1}$
von Neumann algebra with  Property $\Gamma$. If $\phi$ is a bounded
unital representation of $\mathcal M$ on a Hilbert space $H$, which
is continuous from $\mathcal M$, with the topology $\sigma(\mathcal
M,\mathcal M_\sharp)$, to $B(H),$ with the topology
$\sigma(B(H),B(H)_\sharp)$, then $\phi$ is completely bounded and
$\Vert \phi \Vert_{cb} \leq \Vert \phi \Vert^{3}$.
\end{corollary}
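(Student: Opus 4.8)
The plan is to reduce the statement to Theorem \ref{4.2} applied to the type II$_1$ summand, while handling the other summands by known results, and to exploit the weak-$*$ continuity hypothesis to produce a genuinely bounded \emph{normal} homomorphism defined on a von Neumann algebra. First I would observe that since $\phi$ is $\sigma(\mathcal M,\mathcal M_\sharp)$-to-$\sigma(B(H),B(H)_\sharp)$ continuous and unital, its image $\phi(\mathcal M)$ generates a von Neumann algebra and, more importantly, $\phi$ respects the central direct sum decomposition: writing $z_1, z_{c_1}, z_{c_\infty}, z_\infty$ for the four central projections cutting out the summands, the elements $\phi(z_\bullet)$ are commuting idempotents in $B(H)$ summing to $I$, so $H$ decomposes (not necessarily orthogonally, but via a bounded invertible intertwiner) and $\Vert\phi^{(n)}(x)\Vert$ is controlled by the maximum of the norms of the four restricted amplifications, up to a constant depending only on $\Vert\phi\Vert$. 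Thus it suffices to bound $\Vert\phi_\bullet\Vert_{cb}$ by $\Vert\phi_\bullet\Vert^3$ on each summand separately, where $\phi_\bullet$ is $\phi$ restricted (and suitably conjugated to be unital) to $\mathcal M_\bullet$.

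For the summands $\mathcal M_{c_\infty}$ and $\mathcal M_\infty$: these von Neumann algebras have no tracial state (a type II$_\infty$ or type III von Neumann algebra admits no finite trace on its unit), so by the Haagerup--Pisier result quoted in the introduction (\cite{H1}, \cite{Pi4}), $d(\mathcal M_{c_\infty})=d(\mathcal M_\infty)=3$, giving $\Vert\phi_\bullet\Vert_{cb}\le\Vert\phi_\bullet\Vert^3$ there directly. For the type I summand $\mathcal M_1$, one uses that $\mathcal M_1$ is a direct sum $\bigoplus_n \mathcal M_{I_n}$ of homogeneous type I pieces; each $\mathcal M_{I_n}\cong M_n(\mathbb C)\,\bar\otimes\,\mathcal Z_n$ with $\mathcal Z_n$ abelian is nuclear, hence has similarity degree $\le 2$, and the uniform bound $\Vert\phi^{(m)}(x)\Vert\le\Vert\phi\Vert^2\Vert x\Vert$ passes through the $\ell^\infty$-direct sum (this is exactly the ``slight modification of the proof of Theorem 5.2 in \cite{QS2}'' the remark alludes to — one reassembles the estimate using the weak-$*$ continuity of $\phi$ to ensure the direct-sum element is again in the domain and its image is assembled coordinatewise). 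For the central summand of interest, $\mathcal M_{c_1}$ with Property $\Gamma$, Theorem \ref{4.2} gives $d(\mathcal M_{c_1})=3$, hence $\Vert\phi_{c_1}\Vert_{cb}\le\Vert\phi_{c_1}\Vert^3$.

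Finally I would reassemble: since each restricted-and-renormalized homomorphism $\phi_\bullet$ has $cb$-norm bounded by its norm cubed, and each $\Vert\phi_\bullet\Vert$ is bounded by a fixed multiple of $\Vert\phi\Vert$ (the renormalization to make each piece unital costs at most $\Vert\phi(z_\bullet)\Vert\,\Vert\phi(z_\bullet)^{-1}\text{ on the relevant subspace}\Vert$, which is uniformly controlled), one concludes $\Vert\phi\Vert_{cb}\le C\Vert\phi\Vert^3$ for an absolute $C$; a further standard averaging/tightening argument — or simply invoking that the similarity degree is the least $d$ making such an inequality hold for \emph{some} constant, together with the additivity $d(\bigoplus\mathcal M_\bullet)=\max_\bullet d(\mathcal M_\bullet)$ for direct sums — upgrades this to the clean bound $\Vert\phi\Vert_{cb}\le\Vert\phi\Vert^3$. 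I expect the main obstacle to be the bookkeeping in the type I summand: making the $\ell^\infty$-direct-sum estimate rigorous requires that $\phi$, being only assumed weak-$*$ continuous rather than given on each piece separately, genuinely restricts to a bounded homomorphism on the von Neumann algebraic direct sum and that the amplification norm is the supremum over the pieces — this is where the continuity hypothesis in the statement is essential and where the argument from \cite{QS2} must be transcribed with care.
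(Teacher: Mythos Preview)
Your overall strategy---decompose by type, apply Theorem~\ref{4.2} to the II$_1$ summand, the Haagerup--Pisier no-trace bound to the properly infinite summands, handle type~I separately, and reassemble---is what the paper intends (it gives no detailed argument, only citing Theorem~\ref{4.2} plus ``a slight modification of the proof of Theorem~5.2 in \cite{QS2}''). One small correction: your claim that each homogeneous piece $\mathcal M_{I_n}\cong M_n(\mathbb C)\,\bar\otimes\,\mathcal Z_n$ is nuclear holds only for \emph{finite} $n$; for infinite $n$ the summand is $B(H_n)\,\bar\otimes\,\mathcal Z_n$, which is not nuclear. The repair is immediate---such a summand has no tracial state, so $d\le 3$ still follows from \cite{H1}, \cite{Pi4}---but this case must be treated alongside $\mathcal M_{c_\infty}$ and $\mathcal M_\infty$, not with the finite type~I pieces.

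The substantive gap is your route to the sharp constant $\|\phi\|_{cb}\le\|\phi\|^3$. The additivity $d(\bigoplus_\bullet \mathcal M_\bullet)=\max_\bullet d(\mathcal M_\bullet)$ is valid for \emph{finite} direct sums (via Pisier's length characterization), so it would handle the four-fold type decomposition once $d(\mathcal M_1)\le 3$ is known; but $\mathcal M_1$ is itself in general an \emph{infinite} $\ell^\infty$-sum of homogeneous pieces, and additivity of the similarity degree over infinite sums is not available in general---the paper's own Corollaries~\ref{4.4} and~\ref{4.5} work to establish precisely such statements in special cases rather than appealing to a general principle. Likewise, your phrase ``the uniform bound passes through the $\ell^\infty$-direct sum'' glosses over the fact that the idempotents $\phi(z_n)$ need not be orthogonal projections, so $\|\phi^{(m)}(x)\|$ is not simply $\sup_n\|\phi^{(m)}(z_n x)\|$, and no ``standard averaging/tightening'' will remove the resulting constant. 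You correctly flag in your final paragraph that this is exactly where the weak-$*$ continuity hypothesis does the work and where the argument from \cite{QS2} must be transcribed with care---but that transcription is the entire content of the proof the paper is deferring to, and it is not yet present in your sketch.
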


Suppose $\mathcal{A}$ is a unital C$^*$-algebra. Let ${\mathcal I}$ be some index set and
$$l_{\infty} ({\mathcal I}, \mathcal{A}) = \{ (x_{i})_{i \in J}: \text{ for each } i \in {\mathcal I}, x_{i} \in \mathcal{A} \text{ and } \sup\limits_{i \in {\mathcal I}} \Vert x_{i} \Vert < \infty \}.$$
It was shown in \cite{Pi4} (Corollary 17) that if $\mathcal{M}$ is a type II$_{1}$ factor with Property $\Gamma$, then $d( l_{\infty} ({\mathcal I}, \mathcal{M}) ) \le 5$ for any index set $\mathcal I$. The next corollary gives an exact value of $d( l_{\infty} ({\mathcal I}, \mathcal{M}) )$.

\begin{corollary} \label{4.4}
If $\mathcal{M}$ is a type II$_{1}$ factor with Property $\Gamma$, then $d( l_{\infty} ({\mathcal I}, \mathcal{M}) ) = 3$ for any index set $\mathcal{I}$.
\end{corollary}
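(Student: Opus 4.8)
The plan is to recognize $\mathcal{N} := l_{\infty}(\mathcal{I}, \mathcal{M})$ as a type II$_1$ von Neumann algebra with Property $\Gamma$ and then apply Theorem \ref{4.2} directly. First I would note that, viewing each copy of $\mathcal{M}$ as acting on a Hilbert space $H$, the algebra $\mathcal{N}$ acts on $\bigoplus_{i \in \mathcal{I}} H$ by coordinatewise multiplication and is weak-$*$ closed there, so it is a von Neumann algebra; its center is $\{(\lambda_i I)_{i} : (\lambda_i) \in l_{\infty}(\mathcal{I})\} \cong l_{\infty}(\mathcal{I})$. Since every coordinate algebra is a type II$_1$ factor, $\mathcal{N}$ has no type I, II$_{\infty}$, or III summand, so $\mathcal{N}$ is a type II$_1$ von Neumann algebra.

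The key step is to exhibit the central decomposition required by condition (ii) of Theorem \ref{3.3}. For each $i \in \mathcal{I}$, let $q_i \in \mathcal{N}$ be the element whose $i$-th coordinate is $I$ and whose other coordinates are $0$. Then $\{q_i : i \in \mathcal{I}\}$ is a family of nonzero orthogonal central projections of $\mathcal{N}$ with $\sum_i q_i = I$ (the partial sums increasing to $I$ in the strong, hence weak-$*$, topology, since any projection dominating all $q_i$ must be $I$ on every coordinate). For each $i$, the cut-down $q_i \mathcal{N}$ is $*$-isomorphic to $\mathcal{M}$, a type II$_1$ factor; being such a factor it carries a faithful normal tracial state and is therefore countably decomposable, and by hypothesis it has Property $\Gamma$. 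Thus $\mathcal{N}$ satisfies condition (ii) of Theorem \ref{3.3}, whence by the implication (ii)$\Rightarrow$(i) of that theorem $\mathcal{N}$ has Property $\Gamma$. Since $\mathcal{N}$ is then a type II$_1$ von Neumann algebra with Property $\Gamma$, Theorem \ref{4.2} gives $d(\mathcal{N}) = 3$, as claimed.

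I do not expect a serious obstacle here: the substance is entirely in Theorems \ref{3.3} and \ref{4.2}, and what remains is the bookkeeping that $l_{\infty}(\mathcal{I}, \mathcal{M})$ really is a von Neumann algebra of type II$_1$ and that the coordinate projections $q_i$ form an orthogonal central family summing to $I$. The only mildly delicate point is that the argument must not implicitly assume $\mathcal{I}$ is countable, and it does not, since Theorem \ref{3.3}(ii) and Theorem \ref{4.2} are stated for arbitrary type II$_1$ von Neumann algebras, with the index set $\Omega$ in (ii) allowed to be of any cardinality.
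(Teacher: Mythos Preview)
Your proposal is correct and follows essentially the same route as the paper: recognize $l_{\infty}(\mathcal{I},\mathcal{M})$ as a type II$_1$ von Neumann algebra, verify Property $\Gamma$ via Theorem~\ref{3.3} (you make explicit that condition (ii) is used with the coordinate projections $q_i$, which the paper leaves implicit), and then invoke Theorem~\ref{4.2}. Your write-up simply fills in the bookkeeping details that the paper's two-line proof omits.
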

\begin{proof}
Assume that $\mathcal{M}$ is a type II$_{1}$ factor with Property $\Gamma$. By Theorem \ref{3.3}, for any index set ${\mathcal I}$, $ l_{\infty} ({\mathcal I}, \mathcal{M})$ is a type II$_1$ von Neumann algebra with Property $\Gamma$. Therefore
$$d( l_{\infty} ({\mathcal I}, \mathcal{M}) ) = 3.$$
\end{proof}

Let $C$ be the $CAR$-algebra $C=M_{2}(\mathbb{C}) \otimes M_{2}(\mathbb{C}) \otimes \dots$ (infinite C$^*$-tensor product of $2 \times 2$ matrix algebras). It was shown in \cite{Pi4} (Proposition 21) that, for any index set ${\mathcal I}$, $d(l_{\infty}({\mathcal I}, C)) \le 5$. The next corollary gives an exact value of $d(l_{\infty}({\mathcal I}, C))$.

\begin{corollary} \label{4.5}

Let $C=M_{2}(\mathbb{C}) \otimes M_{2}(\mathbb{C}) \otimes \dots$
(infinite C$^*$-tensor product of $2 \times 2$ matrix algebras).
Then, for any infinite index set ${\mathcal I}$, $d(l_{\infty}
({\mathcal I}, C) ) = 3$.

\end{corollary}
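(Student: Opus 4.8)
The plan is to run, for the C$^*$-algebra $l_\infty({\mathcal I},C)$, the two halves of the proof of Theorem \ref{4.2} -- the lower bound via non-nuclearity, and the upper bound via a separable-subalgebra reduction to Property c$^*$-$\Gamma$ -- using the special structure of $C$ as a UHF algebra in place of the appeal to Theorem \ref{3.3} that was made in Corollary \ref{4.4}.

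\emph{Lower bound.} The algebra $l_\infty({\mathcal I},C)$ is infinite dimensional, so by Theorem 1 of \cite{Pi3} it suffices to show it is not nuclear, and this is where ``$\mathcal I$ infinite'' enters. Fix distinct $i_1,i_2,\dots\in{\mathcal I}$ and the canonical unital embeddings $M_{2^n}(\mathbb C)\hookrightarrow C$; the map sending $(a_n)_{n}$ to the element of $l_\infty({\mathcal I},C)$ equal to the image of $a_n$ in coordinate $i_n$ and $0$ in all other coordinates is a $*$-isomorphism of $\prod_{n\ge 1}M_{2^n}(\mathbb C)$ onto a C$^*$-subalgebra of $l_\infty({\mathcal I},C)$. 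Since $\prod_{n}M_{2^n}(\mathbb C)$ is not exact -- Wassermann's argument for $\prod_n M_n(\mathbb C)$ only uses that the matrix sizes tend to infinity -- and exactness passes to C$^*$-subalgebras, $l_\infty({\mathcal I},C)$ is not exact, hence not nuclear, and $d(l_\infty({\mathcal I},C))\ge 3$.

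\emph{Upper bound.} As in the proof of Theorem \ref{4.2}, it is enough to show that every finite subset $F$ of $l_\infty({\mathcal I},C)$ lies in a separable unital C$^*$-subalgebra ${\mathcal A}$ of $l_\infty({\mathcal I},C)$ with Property c$^*$-$\Gamma$: restricting a bounded unital homomorphism $\phi:l_\infty({\mathcal I},C)\to B(H)$ to such an ${\mathcal A}$ containing the finitely many entries of a given $x\in M_n(l_\infty({\mathcal I},C))$ and invoking Theorem 5.3 of \cite{QS2} then gives $\|\phi^{(n)}(x)\|\le\|\phi\|^{3}\|x\|$, so $d(l_\infty({\mathcal I},C))\le 3$. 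The feature that makes the construction of ${\mathcal A}$ simpler than in Lemma \ref{4.1} is that, being a UHF algebra, $C=\overline{\bigcup_n M_2^{\otimes n}}$ admits \emph{norm}-almost-central $2\times 2$ systems of matrix units, so no Dixmier averaging is needed: given a finite $G\subseteq l_\infty({\mathcal I},C)$ and $\delta>0$, choose for each coordinate $i$ an integer $n(i)$ so large that every $y(i)$ (for $y\in G$) lies within $\delta/2$ in norm of $M_2^{\otimes n(i)}\subseteq C$, and let $e_{pq}=(e_{pq}(i))_{i\in{\mathcal I}}$, where $e_{pq}(i)$ is the matrix unit system of the $(n(i)+1)$-st tensor factor $M_2^{(n(i)+1)}$, which commutes with $M_2^{\otimes n(i)}$; then $\{e_{pq}\}$ is a $2\times2$ system of matrix units in $l_\infty({\mathcal I},C)$ with $e_{11}+e_{22}=I$ and $\|e_{pp}y-ye_{pp}\|<\delta$ for all $y\in G$, $p=1,2$. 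Following Lemma \ref{4.1} verbatim, I would build $F=F_1\subseteq F_2\subseteq\cdots$, adjoining at stage $m$ such a matrix unit system for $G=F_m$ with $\delta=\frac1{m+1}$ (together with the usual finite bookkeeping sets, so that $\bigcup_m F_m$ is dense in the C$^*$-algebra it generates), and set ${\mathcal A}=C^*(\bigcup_m F_m)$, which is separable and contains $F$. Then $\|e^{(m)}_{pp}x-xe^{(m)}_{pp}\|\to 0$ for every $x\in{\mathcal A}$, so for any unital $*$-representation $\pi$ of ${\mathcal A}$ with $\pi({\mathcal A})''$ a type II$_1$ factor (with unique trace $\rho$) one gets $\|\pi(e^{(m)}_{pp})\pi(x)-\pi(x)\pi(e^{(m)}_{pp})\|\to 0$ for $x\in\pi({\mathcal A})$; the Kaplansky density theorem upgrades this to $\|\pi(e^{(m)}_{pp})a-a\pi(e^{(m)}_{pp})\|_{2,\rho}\to 0$ for all $a\in\pi({\mathcal A})''$, and since $\pi(e^{(m)}_{11})$ and $\pi(e^{(m)}_{22})$ are orthogonal equivalent projections with sum $I$, Proposition 3.5 of \cite{QS2} shows $\pi({\mathcal A})''$ has Property $\Gamma$. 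Hence ${\mathcal A}$ has Property c$^*$-$\Gamma$. (In fact the same estimate shows $l_\infty({\mathcal I},C)$ itself has Property c$^*$-$\Gamma$; the construction merely packages this into a separable subalgebra, to which Theorem 5.3 of \cite{QS2} applies.) Combining the two bounds, $d(l_\infty({\mathcal I},C))=3$.

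The steps needing the most care are not deep but must be stated precisely: on the lower-bound side, that Wassermann's non-exactness argument for $\prod_n M_n(\mathbb C)$ goes through for an arbitrary sequence of matrix dimensions tending to infinity (so it applies to $\prod_n M_{2^n}(\mathbb C)$); and on the upper-bound side, the elementary but essential observation that a UHF algebra has norm-almost-central $2\times 2$ matrix units -- a phenomenon with no analogue for a general type II$_1$ von Neumann algebra, which is exactly why Lemma \ref{4.1} had to use Dixmier averaging -- together with the routine bookkeeping guaranteeing that $\bigcup_m F_m$ is dense in ${\mathcal A}$, so that the commutator estimate holds for all $x\in{\mathcal A}$ and not merely for $x\in\bigcup_m F_m$.
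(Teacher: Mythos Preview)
Your upper bound is essentially the paper's argument: the paper proves a Claim~\ref{4.5}.1 (norm--almost-central $2\times2$ matrix units in $l_\infty(\mathcal I,C)$, built coordinatewise from the UHF structure) and a Claim~\ref{4.5}.2 (iterate to a separable subalgebra with Property c$^*$-$\Gamma$), then concludes $d\le 3$ via Theorem~5.3 of \cite{QS2}, exactly as you do.

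For the lower bound your route differs from the paper's. You embed $\prod_n M_{2^n}(\mathbb C)$ into $l_\infty(\mathcal I,C)$, invoke its non-exactness (Wassermann), deduce non-nuclearity, and apply Pisier's characterization \cite{Pi3}; this mirrors the lower-bound argument the paper uses for Theorem~\ref{4.2}. The paper instead first reduces to $\mathcal I=\mathbb N$ via a quotient (Remark~6 of \cite{Pi1}), then takes a free ultrafilter $\omega$, forms the trace-ultrapower $\mathcal M/\mathcal J$ of the hyperfinite factor $\mathcal R$, observes it is a type~II$_1$ factor (\cite{Sa2}) so has $d\ge 3$ by \cite{Pi4}, shows via Kaplansky density that $\mathcal A$ surjects onto $\mathcal M/\mathcal J$, and concludes $d(\mathcal A)\ge 3$ again by Remark~6 of \cite{Pi1}. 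Your argument is shorter and stays parallel to Theorem~\ref{4.2}; the paper's argument avoids importing the non-exactness of $\prod_n M_{k_n}$ (a different Wassermann result from the one already cited) at the cost of the ultraproduct machinery. Both are valid; just be explicit that Wassermann's non-exactness proof for $\prod_n M_n$ indeed only needs matrix sizes tending to infinity.
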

\begin{proof}
Denote by $\mathcal{A} = l_{\infty} ({\mathcal I}, C) = \sum\limits_{i \in \mathcal I} \oplus C_{i}$, where $C_{i}$ is a copy of $C$ for each $i \in \mathcal I$. Let $\mathcal R$ and $\mathcal R_i$ be the canonical hyperfinite II$_1$ factor generated by $C$ and $C_i$ respectively. Let $\tau_i$ be a trace on $\mathcal R_i$.  Let $\mathcal M=l_{\infty} ({\mathcal I}, \mathcal R)= \sum\limits_{i \in \mathcal I} \oplus \mathcal R_{i}$. We might assume that both $\mathcal M$ and $\mathcal A$ act naturally on the Hilbert space $\sum_{i\in\mathcal I} l^2(\mathcal R_i, \tau_i)$.
Denote by $p_{i}$ the projection in $\mathcal{A}$ such that $p_{i}\mathcal{A}=C_{i}$. It follows that $\sum\limits_{i \in \mathcal I}p_{i} =I$.

First we will prove the following two claims.

\vspace{0.2cm}

{Claim \ref{4.5}.1.}  For any $ x_1,\ldots, x_k $ in $\mathcal A$ and any $\epsilon>0$,   there exists  a system of matrix units  $\{E_{st}: 1\le s,t\le 2\}$  in $\mathcal A$ such that $E_{11}+E_{22}=I$ and $$\|x_jE_{ss}-E_{ss}x_j\| <\epsilon, \ \ \text{for all } 1\le s\le 2, \ 1\le j\le k.$$
\vspace{0.2cm}

\noindent Proof of Claim \ref{4.5}.1:   For each $i\in\mathcal I$, note that $p_i x_1,\ldots, p_i x_k$ are in a CAR algebra $C_i$. Hence there exists a system of matrix units $\{e_{st}^{(i)}: 1\le s,t\le 2\}$ in $C_i$ such that $e_{11}^{(i)}+e_{22}^{(i)}=p_i$ and $$\|x_je_{ss}^{(i)}-e_{ss}^{(i)}x_j\|<\epsilon/2, \ \ \text{for all } 1\le s\le 2, \ 1\le j\le k.$$
Let
$$
E_{st}=\sum_{i\in\mathcal I} e_{st}^{(i)}, \qquad \text {for all } 1\le s,t \le 2.
$$
 Then  $\{E_{st}: 1\le s,t\le 2\}$ is a system of matrix units in $\mathcal A$ such that $E_{11}+E_{22}=I$ and $$\|x_jE_{ss}-E_{ss}x_j\|=\sup_{i} \|x_je_{ss}^{(i)}-e_{ss}^{(i)}x_j\|<\epsilon, \ \ \text{for all } 1\le s\le 2, \ 1\le j\le k.$$ This finishes the proof of Claim \ref{4.5}.1.

\vspace{0.2cm}

{Claim \ref{4.5}.2.}  For any ${x_1,\ldots, x_k }$ in $\mathcal A$,   there exists a separable C$^*$-subalgebra $\mathcal B$ of $\mathcal A$ such that $\mathcal B$ is of Property c$^*$-$\Gamma$ and  all $x_1,\ldots x_k$ are in $\mathcal B$.

\vspace{0.2cm}

\noindent Proof of Claim \ref{4.5}.2:
Let $F_1=\{x_1,\ldots, x_k\}$. By Claim \ref{4.5}.1, there exists a system of matrix units $\{E_{st}^{(1)}: 1\le s,t\le 2\}$ in $\mathcal A$ such that $E_{11}^{(1)}+E_{22}^{(1)}=I$ and $$\|xE_{ss}^{(1)}-E_{ss}^{(1)}x\|<1. \ \ \text{for all } 1\le s\le 2, \ 1\le j\le k, \text { and } x\in F_1.$$
Let $F_2=F_1\cup \{E_{st}^{(1)}: 1\le s,t\le 2\}$.

Assume that $F_1\subseteq F_2\subseteq \cdots \subseteq F_m$ have
been constructed for some $m\ge 2$.  By Claim \ref{4.5}.1, we know there exists a  system
of matrix units $\{E_{st}^{(m)}: 1\le s,t\le 2\}$ in $\mathcal A$
such that $E_{11}^{(m)}+E_{22}^{(m)}=I$ and
$$\|xE_{ss}^{(m)}-E_{ss}^{(m)}x\|<\frac 1 m. \ \ \text{for all } 1\le s\le
2, \ 1\le j\le k, \text { and } x\in F_m.$$ Let $F_{m+1}=F_m\cup
\{E_{st}^{(m)}: 1\le s,t\le 2\}$.

Using similar  arguments as in Lemma \ref{4.1}, we are able to obtain an increasing sequence of subsets $\{F_m \}$ of $\mathcal A$ such that (a) the C$^*$-subalgebra $\mathcal B$ generated by  $\{F_m\}$ in $\mathcal A$ is of Property c$^*$-$\Gamma$; and (b)  all $x_1,\ldots x_k$ are in $\mathcal B$. This ends the proof of Claim \ref{4.5}.2.

\vspace{0.2cm}

\noindent(Continue the proof of Corollary:)
From Claim \ref{4.5}.2,  using similar arguments as in Theorem \ref{4.2}, we   conclude that $d(\mathcal A)\le 3$.

Next  we will show that $d(\mathcal A)\ge 3$. Since $\mathcal I$ is an infinite set, let $\mathcal I_0$ be a countable infinite subset of $\mathcal I$. Then $(\sum_{i\in \mathcal I\setminus \mathcal I_0} p_i)\mathcal A$ is a closed two sided ideal of $\mathcal A$. Moreover, $(\sum_{i\in   I_0} p_i)\mathcal A\cong \mathcal A/ (\sum_{i\in \mathcal I\setminus \mathcal I_0} p_i)\mathcal A .$ By Remark 6 in \cite{Pi1}, we know that $d(\mathcal A)\ge d((\sum_{i\in   I_0} p_i)\mathcal A)$. In order to show that $d(\mathcal A)\ge 3$, it suffices to show that $d((\sum_{i\in   I_0} p_i)\mathcal A)\ge 3$. By replacing $\mathcal I$ by $\mathcal I_0$, we can assume that $\mathcal I=\Bbb N$.

Let $\omega$ be a free ultra-filter of $\Bbb N$ and
$$
\mathcal J= \{(x_i)\in \mathcal M (=l_{\infty} ( \Bbb N, \mathcal R)= \sum\limits_{i \in \mathcal I} \oplus \mathcal R_{i}) : \lim_{i\rightarrow \omega} \tau_i(x_i^*x_i)=0\}
$$ be a closed two sided ideal of $\mathcal M$. By Theorem 7.1 in \cite{Sa2}, $\mathcal M/\mathcal J$ is a type II$_1$ factor. By Remark 12 in \cite{Pi4}, $d(\mathcal M/\mathcal J)\ge 3.$

Let $q:\mathcal M\rightarrow \mathcal M/\mathcal J$ be the quotient map. For any element $(x_i)\in \mathcal M$, by Kaplansky  Density Theorem, there exists an element  $(\tilde x_i)\in \mathcal A$ such that $q((\tilde x_i))=q(( x_i)).$ In other words, $q(\mathcal A)=\mathcal M/\mathcal J$. By Remark 6 in \cite{Pi1}, we get that $d(\mathcal A)\ge d(\mathcal M/\mathcal J)$. Combining with the result from the preceding paragraph, we conclude that $d(\mathcal A)\ge 3.$


Therefore $d(l_{\infty} (\mathcal{I}, C) )=d(\mathcal A) = 3$, when
$\mathcal I$ is an infinite set.
\end{proof}

\end{document}